\theoremstyle{plain}
  \newtheorem{theorem}{Theorem}
  \newtheorem{lemma}{Lemma}
\theoremstyle{definition}
    \newtheorem{proposition}[subsection]{Proposition}
  \newcommand{\R}{\mathbb{R}}
\newcommand{\N}{\mathbb{N}}
\begin{document}
\title[]{Closed Mean Curvature Self-Shrinking Surfaces of Generalized Rotational Type}
\author[P.~McGrath]{Peter~McGrath}
\address{Department of Mathematics, Brown University, Providence,
RI 02912} \email{peter\_mcgrath@math.brown.edu}

\maketitle
\begin{abstract}
For each $n\geq 2$ we construct a new closed embedded mean curvature self-shrinking hypersurface in $\R^{2n}$.  These self-shrinkers are diffeomorphic to $S^{n-1}\times S^{n-1}\times S^1$ and are $SO(n)\times SO(n)$ invariant.  The method is inspired by constructions of Hsiang and these surfaces generalize self-shrinking ``tori'' diffeomorphic to $S^{n-1}\times S^1$ constructed by Angenent. 
\end{abstract}

\section{Introduction}
In the study of mean curvature flow, self-similar and in particular self-shrinking solutions arise naturally as separable solutions for the mean curvature flow PDE. 

An $n$-dimensional hypersurface $\Sigma^n \subset \R^{n+1}$ is called a \emph{mean curvature self-shrinker} (hereafter referred to as a \emph{self-shrinker}) if it satisfies the equation
\begin{equation}\label{shrinker equation}
 H = \frac{\langle \vec{X}, \vec{\nu}\rangle}{2}
 \end{equation}
 
where $\vec{X}$ is the position vector, and $\vec{\nu}$ is the unit normal such that $\vec{H} = - \vec{\nu}$.  Under this normalization, $\Sigma$ shrinks to a point at $t =1$. 

 Self-shrinking solutions are important in the analysis of singularities in mean curvature flow as a whole, as Huisken \cite{Huisken} has shown that the formation of a singularity in mean curvature flow resembles a self-shrinking solution after a sequence of appropriate rescalings.

Despite the importance of self-shrinkers in the study of singularities, the list of rigorously constructed examples of closed self-shrinkers is quite short.  
In 1989, Angenent \cite{Angenent} proved for each $n\geq 1$ the existence of an embedded mean curvature self-shrinker diffeomorphic to 
$S^{n-1}\times S^1$.  To the author's knowledge, the only known closed, embedded self-shrinkers other than the sphere and those constructed by Angenent were constructed by M\o ller in \cite{Moller}.  In the latter paper, the author constructs self-shrinkers with genus $g = 2k$ for each large enough $k\in \N$ in $\R^3$. His method involves desingularizing the intersection of a sphere and Angenent's torus in $\R^3$.

Within the class of rotationally invariant surfaces, the mean curvature flow equation reduces to a second-order nonlinear ODE on the space of orbits.  Kleene and M\o ller \cite{KM} conducted an analysis of the rotationally invariant case which resulted in a classification of complete embedded shrinkers in $\R^{n+1}$ invariant under $O(n)$.  We study mean curvature shrinkers with a more general rotational type, namely surfaces invariant under $O(m)\times O(n)$ for $m, n>1$.  A primary motivation is to construct new examples of closed, embedded self-shrinkers.  

The ansatz of rotational invariance as a mechanism for constructing objects satisfying some geometric property is not new.  Consider for example the constant mean curvature constructed by Delaunay in 1841 \cite{Delaunay}.  The Delaunay surfaces were the first nontrivial examples of constant mean curvature surfaces and have been used as building blocks for more recent constructions \cite{Kapouleas CMC}.   Beginning in the 1960s, Hsiang began a systematic study of manifolds invariant under general Lie groups. In a joint work with Lawson \cite{H4}, the authors classify minimal surfaces in $S^n$ with certain invariance groups of  ``low cohomogeneity."  Subsequently Hsiang used these methods which he described as ``equivariant differential geometry'' to prove various results, in particular the existence of minimal hyper-spheres in $S^{n}$ not congruent to the equator for various $n\geq 4$ (the so-called ``spherical Bernstein problem'' \cite{H1}), and the existence of infinitely many noncongruent, closed, embedded minimal surfaces in $S^n$ for $n\geq 3$ \cite{H3}.

In the spirit of Angenent's construction of self shrinking surfaces diffeomorphic to $S^{n-1}\times S^1$ and invariant under $SO(n)$, we more generally prove

\begin{theorem}\label{theorem: construction}
For each integer $n\geq 2$, there is an embedded mean curvature self-shrinker $\Sigma^{2n-1}\subset \R^{2n}$ diffeomorphic to $S^{n-1}\times S^{n-1}\times S^1$ and invariant under the action of $SO(n)\times SO(n)$ on $\R^{2n}$.

\end{theorem}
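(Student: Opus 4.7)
The plan is to follow the equivariant differential geometry strategy of Hsiang and Angenent. Decompose $\R^{2n} = \R^n \oplus \R^n$ with coordinates $(x,y)$ and let $SO(n)\times SO(n)$ act factorwise. The orbit space is the closed first quadrant $Q = \{(u,v) : u,v \geq 0\}$, where $(u,v) = (|x|,|y|)$ and the generic orbit is $S^{n-1}(u)\times S^{n-1}(v)$; orbital volumes are proportional to $u^{n-1}v^{n-1}$. An $SO(n)\times SO(n)$-invariant hypersurface in $\R^{2n}$ is determined by its profile curve $\gamma \subset Q$, and a standard computation reduces \eqref{shrinker equation} to the condition that $\gamma$ be a geodesic of the conformally flat weighted metric
\[ g = \phi^2 \, (du^2+dv^2), \qquad \phi(u,v) = u^{n-1}v^{n-1}\,e^{-(u^2+v^2)/4}, \]
equivalently a critical point of the weighted length $L(\gamma) = \int_\gamma \phi\, ds$. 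A closed embedded self-shrinker diffeomorphic to $S^{n-1}\times S^{n-1}\times S^1$ corresponds to a simple closed $g$-geodesic lying in the open quadrant $Q^\circ$.

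The next step is to exploit the reflection $\sigma:(u,v)\mapsto (v,u)$ across the diagonal $D=\{u=v\}$, which preserves $\phi$ and hence $g$. Any $\sigma$-invariant simple closed $g$-geodesic must cross $D$ orthogonally at exactly two points; conversely, a $g$-geodesic arc from $(a,a)\in D$ perpendicular to $D$ that returns to $D$ orthogonally at some $(b,b)$ extends by reflection to a smooth closed geodesic. For $a>0$ let $\gamma_a$ be the $g$-geodesic with $\gamma_a(0)=(a,a)$ and unit initial velocity perpendicular to $D$ pointing into $\{u>v\}$, and let $\Theta(a)$ denote the angle between $\dot\gamma_a$ and $D$ at the first subsequent return to $D$. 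The theorem reduces to finding $a_*$ with $\Theta(a_*)=\pi/2$.

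To produce such $a_*$ by intermediate value, note that $\log\phi$ has a unique interior critical point $P_*=(\sqrt{2(n-1)},\sqrt{2(n-1)})\in D$, a nondegenerate maximum with Hessian $-I$. Linearizing the geodesic flow at $P_*$ yields a rotation-invariant system whose orbits are closed loops about $P_*$ with a computable rotation number, producing a limiting return angle $\Theta(a) \to \Theta_0$ as $a \to \sqrt{2(n-1)}$. In a complementary regime — $a$ either far from $\sqrt{2(n-1)}$ or probing near an axis — one compares $\gamma_a$ with the gradient flow of $\log\phi$ and with the Gaussian barrier from the $e^{-(u^2+v^2)/4}$ factor to produce a return angle on the opposite side of $\pi/2$. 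Continuous dependence on initial data then gives $a_*$ with $\Theta(a_*)=\pi/2$, and doubling the arc across $D$ yields a $C^\infty$ simple closed $g$-geodesic.

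The hardest part is global control of the geodesic flow: one must show that $\gamma_a$ exists long enough to return to $D$ for a sufficient range of $a$, without first escaping through the coordinate axes, and extract a sign change in $\Theta$ to apply the intermediate value theorem. Because $\phi$ vanishes to order $n-1$ on $\partial Q$, the $g$-length diverges along paths approaching the axes, which should preclude finite $g$-time escape, but this requires quantitative barrier estimates. Equally delicate is embeddedness: ruling out transverse self-intersections of $\gamma_a$ before its first return to $D$ will demand a convexity or monotonicity argument parallel to the one Angenent deployed in his construction of self-shrinking tori diffeomorphic to $S^{n-1}\times S^1$.
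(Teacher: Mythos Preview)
Your high-level framework---reduction to the orbit space $Q$, the weighted conformal metric $g$, the reflection symmetry across the diagonal $D$, and a shooting/continuity argument for a geodesic arc meeting $D$ orthogonally at both ends---matches the paper exactly. But the specific two-regime argument you propose has a real gap. Linearizing the geodesic flow at $P_*$ does \emph{not} produce closed loops: to leading order the metric near $P_*$ is a constant times the flat metric, so geodesics are nearly straight lines, and a ray launched orthogonally to $D$ simply moves away from $D$ rather than returning. There is no rotation number to compute here and no well-defined limiting return angle $\Theta_0$ emerges from this local picture; whatever bending occurs is a second-order curvature effect that does not close up the orbit.

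The paper replaces this step with a different pair of regimes. For one endpoint it uses the explicit self-shrinking sphere, whose profile is the circular arc $u^2+v^2=2(2n-1)$: this geodesic leaves $D$ orthogonally and terminates on the $u$-axis, not on $D$. For the other endpoint ($R\to\infty$, starting far out on $D$) the paper shows $\gamma_R$ returns to $D$ arbitrarily close to the origin; the technical heart is ruling out that $\gamma_R$ merely asymptotes to $D$ without crossing it, which is done by rescaling, extracting a limit solving the \emph{linearization of the profile ODE about the line $D$} (not about the point $P_*$), and running a Frobenius analysis at the regular singular point $r=0$ to show no positive bounded limit can exist. The continuity argument is then not on your return angle $\Theta(a)$ but on the dichotomy ``does $\gamma_R$ reach $D$ or the $u$-axis first?'': one sets $R_*$ to be the infimum of those $R$ beyond which all shoots return to $D$, and an open/closed argument forces $\gamma_{R_*}$ to meet $D$ orthogonally. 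Embeddedness is essentially free, since the graph of $\gamma_R$ over $D$ is shown to have at most one critical point (a maximum), so no self-intersection can occur before the first return.
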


The self-shrinkers constructed in Theorem \ref{theorem: construction} are the first rigorously constructed family of odd-dimensional closed embedded self-shrinkers since Angenent's examples from 1989.  

The author extends his thanks to his thesis advisor, Nikolaos Kapouleas, for suggesting Hsiang's techniques as an avenue for constructing self-shrinking surfaces and for pointing him to Angenent's paper.  It is a pleasure to thank Justin Corvino and Frederick Fong for extensive feedback on an earlier version of this paper.  The author also thanks Farhan Abedin and Mamikon Gulian for helpful conversations.

%
\section{Notation and Terminology}

Let $O(m)\times O(n)$ act on \[ \R^{m+n}= \R^m\oplus \R^n = \{ (\vec{x}, \vec{y}): \vec{x}\in \R^m, \vec{y}\in \R^n\} \]  with the usual product action.  Let $i: \Sigma^{m+n-1} \hookrightarrow \R^{m+n}$ be an immersed hypersurface.  We will abuse notation by identifying $\Sigma$ with its image in $\R^{m+n}$.  We say a hypersurface $\Sigma^{m+n-1}$ is \emph{invariant} under the action of $O(m)\times O(n)$ if the action of $O(m)\times O(n)$ on $\R^{m+n}$ preserves $\Sigma$, so in particular $\Sigma$ has a foliation by copies of $S^{m-1}\times S^{n-1}$ of varying radii.
We identify the orbit space $\R^{m}\times \R^n / (O(m)\times O(n))$ with the closed first quadrant $Q = \{ (x, y)\in\R^2: x, y \geq 0\}$ under the projection map $\Pi: \R^{m+n}\rightarrow Q$ defined by  \[ \Pi (\vec{x}, \vec{y})=( |\vec{x}|, |\vec{y}|) := (x, y).\]  We call the image $\Pi(\Sigma)$ in the orbit space of an $O(m)\times O(n)$-invariant self-shrinker $\Sigma$ the associated \emph{profile curve} $\gamma_\Sigma$.  Up to isometries, there is a one to one correspondence between smooth $O(m)\times O(n)$ invariant hypersurfaces in $\R^{m+n}$ and smooth curves in the interior of $Q$.
We denote $S^k(r) = \{ |x| \in \R^{k+1}: |x| = r\}$ the sphere of radius $r$ in $\R^k$.  For a submanifold $\Sigma \subset S^{k-1}(1) \subset \R^k$ the \emph{cone over} $\Sigma$ is the set
$ C(\Sigma) = \{ p \in \R^k: p/|p| \in \Sigma\}.$

Since the class of self-shrinkers in $\R^k$ are minimal surfaces with respect to the metric
\[ e^{- \frac{|X|^2}{2k}} \sum_{i=1}^{k} (dx^i)^2\]
(where $X$ denotes the position vector in $\R^k$) which is conformal to the standard metric by a Gaussian factor \cite{Angenent},
it follows that the profile curve associated to an $O(m)\times O(n)$ invariant shrinker is a geodesic with respect to the metric

\begin{align}\label{eq: gaussian metric}
 g = x^{2(m-1)} y^{2(n-1)} e^{-\frac{(x^2+y^2)}{2}} \left( dx^2+ dy^2\right)
\end{align}
which degenerates along the $x$ and $y$ axes of $Q$.  The Euler-Lagrange equation for the length functional corresponding to this metric is an ODE whose solution curves correspond to self-shrinkers, but we choose to derive this ODE by directly computing the principle curvatures below.

Let $\Sigma$ be an $O(m)\times O(n)$ invariant hypersurface with profile curve $\gamma_\Sigma(t) = (x(t), y(t))$, where $\gamma_\Sigma$ is parametrized with respect to Euclidean arc-length.   By the rotational invariance, one finds $\Sigma$ has $m-1$ principle curvatures equal to 
\[ \frac{ y'(t)}{x(t) (x'(t)^2+y'(t)^2)^\frac{1}{2}},\]
$n-1$ principle curvatures equal to
\[ -\frac{ x'(t)}{y(t) (x'(t)^2+y'(t)^2)^\frac{1}{2}}\]
and one principle curvature equal to
\[ \frac{x'(t)y''(t)-x''(t)y'(t)}{ (x'(t)^2+y'(t)^2)^\frac{3}{2}}.\] 
The unit normal $\nu(t)$ to $\gamma(t)$ is \[ \frac{\left(-y'(t), x'(t)\right)}{(x'(t)^2+y'(t)^2)^\frac{1}{2}},\] hence

\[\frac{\langle \vec{X}, \vec{\nu}\rangle}{2} = \frac{1}{2} \frac{y(t)x'(t) - y'(t)x(t)}{( x'(t)^2+y'(t)^2)^{\frac{1}{2}}}.\] 
Therefore the self-shrinker Equation \eqref{shrinker equation} reduces in this case to

\begin{multline}
\label{parametric ode}
 -x''(t) y'(t) + x'(t) y''(t) = \\
 \left( \frac{x(t) y'(t) - x'(t) y(t)}{2} + \frac{ (n-1) x'(t)}{ y(t)} - \frac{(m-1)y'(t)}{x(t)} \right) (x'(t)^2+ y'(t)^2).
\end{multline}

If we assume $\gamma_\Sigma$ is locally graphical over the $x$-axis, $y = u(x)$, then Equation \eqref{parametric ode} reduces to

\begin{align}\label{graphical ode}
u''(x) = \left( \frac{ xu'(x) - u(x)}{2} +\frac{(n-1)}{u(x)} -\frac{(m-1) u'(x)}{x}\right) (1+(u'(x))^2).
\end{align}

If we introduce $\theta(t) = \arctan \left( \frac{y'(t)}{x'(t)}\right)$ and compute $\theta'(t)$ via Equation \eqref{parametric ode}, we get the system

\begin{align}\label{geodesic flow}
\begin{cases}
\dot{x} &= \cos \theta \\
\dot{y} &= \sin \theta \\
\dot{\theta} &= \left( \frac{x}{2} - \frac{m-1}{x}\right) \sin \theta + \left( \frac{n-1}{y} - \frac{y}{2}\right) \cos \theta
\end{cases}
\end{align}
which is a flow on the unit tangent bundle of $Q$.  Observe that equation \eqref{geodesic flow} remains true even at places where $x'(t) = 0$.  This system will be useful in describing the local behavior of solutions of Equation \eqref{parametric ode}.

Below, we will say that a curve $\gamma(t)$ is a \emph{geodesic} or a \emph{solution} if it solves equation \eqref{parametric ode}.  With an appropriate parametrization, such a curve is actually a geodesic with respect to the metric given in \eqref{eq: gaussian metric}. If $\gamma$ is further unit speed parametrized (with respect to the usual Euclidean metric on $Q$), we will also sometimes call $\gamma(t)$ a \emph{geodesic} or a \emph{solution} if the triple $(x(t), y(t), \theta(t))$ satisfies Equation \eqref{geodesic flow}.

We will denote the graph of the line $y = \sqrt{\frac{n-1}{m-1}}$ by $\ell$, and let $\ell^+$ and $\ell^-$ be the regions in $Q$ where $y> \sqrt{\frac{n-1}{m-1}}$ and $y<\sqrt{\frac{n-1}{m-1}}$ respectively.  
\section{ODE Analysis}

We first catalogue some trivially verified solutions to equation \ref{parametric ode} which will be useful in the proofs of Theorem 1.  

\begin{proposition}\label{prop: simple solutions}
The following curves are solutions to $(1)$:
\begin{enumerate}
	\item $y = \sqrt{ \frac{ n-1}{m-1} } \,x$
	\item $x^2+y^2 =2(m+n-1)$
	\item $x = \sqrt{2(m-1)}$;  $x = 0$
	\item $y= \sqrt{2(n-1)}$;  $y = 0$.
\end{enumerate} 
Moreover, these are the unique solutions among the following classes of curves: lines through the origin, circles centered at the origin, vertical lines, horizontal lines.
\end{proposition}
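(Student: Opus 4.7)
The plan is a direct case-by-case verification. For each of the four families in the statement---lines through the origin, circles centered at the origin, vertical lines, and horizontal lines---I would substitute a convenient parametrization of a generic member into \eqref{parametric ode} (or the graphical reduction \eqref{graphical ode} where applicable) and solve the resulting algebraic equation for the free parameter. Because the condition derived is both necessary and sufficient, the same computation verifies that the listed curves are solutions and shows that no other curves in the corresponding classes can be, so both halves of the proposition fall out together.

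Concretely, for a line through the origin $u(x) = cx$ with $c>0$, substitution into \eqref{graphical ode} gives
\[ 0 = \left(\frac{n-1}{cx} - \frac{(m-1)c}{x}\right)(1+c^2),\]
forcing $c^2 = (n-1)/(m-1)$. For a horizontal line parametrized as $(t,b)$ and a vertical line parametrized as $(a,t)$, the left-hand side of \eqref{parametric ode} vanishes because $x'' = y'' = 0$, and the right-hand side collapses to $-b/2 + (n-1)/b$ and $a/2 - (m-1)/a$ respectively; setting these to zero yields $b = \sqrt{2(n-1)}$ and $a = \sqrt{2(m-1)}$. For the circle $x^2 + y^2 = r^2$, the arclength parametrization $(r\cos(s/r), r\sin(s/r))$ produces $-x''y' + x'y'' = 1/r$ on the left, while on the right
\[ \frac{xy' - x'y}{2} + \frac{(n-1)x'}{y} - \frac{(m-1)y'}{x} = \frac{r}{2} - \frac{m+n-2}{r},\]
(using $(x')^2 + (y')^2 = 1$), so \eqref{parametric ode} reduces to $r^2 = 2(m+n-1)$.

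No step presents a real obstacle; the only mild subtlety is the status of the axis solutions $x = 0$ and $y = 0$, along which the coefficients of \eqref{parametric ode} are singular. These curves lie in $\partial Q$ and correspond to degenerate orbit strata rather than to smooth $O(m)\times O(n)$-invariant hypersurfaces in $\R^{m+n}$; I would dispatch them separately as limiting cases of the vertical and horizontal line families (or by noting directly that the flow \eqref{geodesic flow} preserves the axes in a suitable extended sense), rather than through the generic parametric substitution used in the four main cases.
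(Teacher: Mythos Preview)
Your proposal is correct and is precisely the routine verification the paper has in mind: the paper does not write out a proof at all, introducing the proposition only as ``trivially verified solutions,'' and your case-by-case substitution into \eqref{parametric ode} or \eqref{graphical ode} is exactly that verification. Your handling of the degenerate axes $x=0$ and $y=0$ is also consistent with the paper, which immediately dismisses them as not corresponding to hypersurfaces.
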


Geometrically, the self-shrinker corresponding to the line $y= \sqrt{ \frac{ n-1}{m-1} } x$ is the cone over the product $S^{m-1}\left( \sqrt{ \frac{ m-1}{m+n -2}}\right)\times S^{n-1} \left( \sqrt{ \frac{ n-1}{m+n-2}}\right)$.  It is straightforward to see that $S^{m-1}\left( \sqrt{ \frac{ m-1}{m+n -2}}\right )\times S^{n-1} \left( \sqrt{ \frac{ n-1}{m+n-2}}\right)$ is minimal in $S^{m+n-1}$, and we more generally have the following trivial result.

\begin{proposition}
Suppose $\Sigma^{k-2} \subset S^{k-1}$ is a minimal surface.  Then $C(\Sigma)$, the cone over $\Sigma$, satisfies the self-shrinker equation in $\R^k$. 
\end{proposition}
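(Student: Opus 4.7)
The plan is to verify that both sides of the self-shrinker equation \eqref{shrinker equation} vanish identically on $C(\Sigma)$. The two checks are essentially independent and both classical.

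First, I would show the right-hand side vanishes. At any point $p = r\sigma \in C(\Sigma)$ with $r > 0$ and $\sigma \in \Sigma \subset S^{k-1}$, the position vector $\vec{X}=p$ is proportional to $\sigma$, which is the velocity vector of the ray $t \mapsto t\sigma$ lying entirely in $C(\Sigma)$. Hence $\vec{X}$ is tangent to $C(\Sigma)$, forcing $\langle \vec{X}, \vec{\nu}\rangle = 0$.

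Second, I would show $H = 0$, i.e., that $C(\Sigma)$ is minimal in $\R^k$. Pick a local orthonormal frame $e_1,\dots,e_{k-2}$ of $T_\sigma \Sigma$ and parametrize $C(\Sigma)$ near $r\sigma$ by $\Phi(r,u) = r\phi(u)$, where $\phi$ is a local parametrization of $\Sigma$ with $\partial_{u^i}\phi(0) = e_i$. Direct computation of the first fundamental form at $(r,0)$ gives the warped-product form $dr^2 + r^2 g_\Sigma$. The unit normal $\vec{\nu}$ to $C(\Sigma)$ at $r\sigma$ coincides with the unit normal to $\Sigma$ in $S^{k-1}$ at $\sigma$, since the latter vector is already orthogonal to both the radial direction $\sigma$ and to $T_\sigma\Sigma$. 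Computing the second fundamental form, I would find $h_{rr} = 0$ (rays through the origin are straight lines in $\R^k$), $h_{ri} = 0$ (from $\vec{\nu} \perp e_i$), and $h_{ij} = r\, h^{\Sigma \subset S^{k-1}}_{ij}$, using that the $S^{k-1}$-normal and the ambient-$\R^k$-normal contributions to the second fundamental form of $\Sigma$ in $\R^k$ are orthogonal. Tracing with the inverse metric yields $H_{C(\Sigma)}(r\sigma) = \tfrac{1}{r}\, H_\Sigma(\sigma)$, which vanishes by the minimality hypothesis on $\Sigma$.

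I do not anticipate any serious obstacle: the content is the classical fact that cones over minimal submanifolds of the sphere are minimal in Euclidean space, combined with the elementary observation that the position vector is everywhere tangent to such a cone. A slightly more elegant alternative, which I would mention if space permitted, is to invoke the conformal reformulation from \eqref{eq: gaussian metric}: since $C(\Sigma)$ is invariant under dilations centered at the origin, minimality of $C(\Sigma)$ in $\R^k$ is equivalent to minimality in the Gaussian-conformal metric, bypassing the explicit curvature computation.
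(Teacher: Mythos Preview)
Your proposal is correct and follows exactly the paper's approach: both sides of \eqref{shrinker equation} vanish on $C(\Sigma)$, since the position vector is tangent to a cone and since cones over minimal submanifolds of $S^{k-1}$ are minimal in $\R^k$. The paper simply invokes the latter as a well-known fact, whereas you spell out the warped-product computation giving $H_{C(\Sigma)}(r\sigma)=\tfrac{1}{r}H_\Sigma(\sigma)$; otherwise the arguments are identical.
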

\begin{proof}
It is a well known fact that $C(\Sigma)\subset \R^k$ is minimal if $\Sigma \subset S^{k-1}$ is minimal, so $H( C(\Sigma))=0$.  Since $C(\Sigma)$ is a cone, $\langle \vec{X}, \vec{\nu} \rangle = 0$. 
\end{proof}

The portion of the circle $x^2+y^2 = 2(m+n-1)$ in $Q$ corresponds to the sphere $S^{m+n-1}(\sqrt{2(m+n-1)})$ as foliated by copies of $S^{m-1}\times S^{n-1}$.
The lines $x = \sqrt{2(m-1)}$ and $y = \sqrt{2(n-1)}$ correspond to the products $S^{m-1}(\sqrt{2(m-1)})\times C( S^{n-1})$ and $C(S^{m-1})\times S^{n-1}(\sqrt{2(n-1)})$.  Finally, the axes $x = 0$ and $y=0$ correspond to the cones $C(S^{n-1})$ and $C(S^{m-1})$.  These are not hypersurfaces, hence will not be discussed further.  

 Although the Equations \eqref{parametric ode} and \eqref{geodesic flow} are singular when $x = 0$ or $y = 0$, Proposition \ref{prop: simple solutions} nevertheless exhibits geodesics which intersect these lines, albeit orthogonally.  A straightforward modification of the proof of part 2 of  Lemma 9 in \cite{KM} proves that this is the \emph{only} way a geodesic may intersect one of the axes.

\begin{lemma}\label{lemma: degeneracy}
Suppose $\gamma_i: (a, b), i=1, 2$ are solutions of equation \eqref{parametric ode}   and 
\begin{enumerate}
\item $\gamma_1$ is a graph over the $x$-axis and  $\lim_{t\rightarrow b} = (x_b, 0)$ where $x_b> 0$.
\item $\gamma_2$ is a graph over the $y$-axis and  $\lim_{t\rightarrow b}  = (0, y_b)$ where $y_b>0$. 
\end{enumerate}
Then $\gamma_i$ extends smoothly to $(a, b]$ and $\gamma_i$ intersects the corresponding axis orthogonally. 
\end{lemma}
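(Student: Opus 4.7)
The plan is to reduce to case (1) by the symmetry $(x, m) \leftrightarrow (y, n)$ of Equation~\eqref{parametric ode}, and then proceed in two steps: first show the tangent direction at the limit exists and is vertical, and then obtain the smooth extension by a change of variables. By the graph hypothesis, parametrizing $\gamma_1$ by Euclidean arc length gives $(x(t), y(t), \theta(t))$ solving~\eqref{geodesic flow} with $\cos\theta = \dot{x}$ of constant sign; orienting so that $\cos\theta > 0$ forces $\theta \in (-\pi/2, \pi/2)$. Since $y(t) \to 0^+$, necessarily $\dot{y} = \sin\theta \leq 0$ for $t$ near $b$, so in fact $\theta \in (-\pi/2, 0]$ on a terminal interval.

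The first main step is to prove $\theta(t) \to -\pi/2$ as $t \to b$. Arc-length parametrization gives $|\dot{y}| \leq 1$ and $y(b) = 0$, so $y(t) \leq b - t$ near $b$ and consequently $\int^b dt/y(t) = \infty$. Suppose, for contradiction, that $\liminf_{t \to b} \theta(t) \geq -\pi/2 + 2\delta$ for some $\delta > 0$; then $\cos\theta \geq \sin\delta > 0$ on some terminal interval $(b-\epsilon, b)$. Since $x \to x_b > 0$, the term $(x/2 - (m-1)/x)\sin\theta$ and $-(y/2)\cos\theta$ in the expression for $\dot\theta$ in~\eqref{geodesic flow} are bounded, so
\[ \dot\theta \geq \frac{(n-1)\sin\delta}{y} - C \]
for some constant $C$. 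Integrating from a fixed $t_0 \in (b-\epsilon, b)$ and using divergence of $\int dt/y$ yields $\theta(t) \to +\infty$, contradicting $\theta \leq 0$. Hence $\liminf \theta = -\pi/2$; an analogous argument on intervals where $\theta$ returns toward $-\pi/2 + 2\delta$ (combined with the sign analysis of $\dot\theta$ there) rules out $\limsup \theta > -\pi/2$, giving $\theta \to -\pi/2$.

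For the smooth extension, the plan is to reparametrize locally near $(x_b, 0)$ by writing $x = \xi(y)$, which is possible since $\theta \to -\pi/2$ makes $x(t)$ have vanishing derivative in the limit while $y(t)$ is strictly monotone. The condition $\theta \to -\pi/2$ translates to $\xi(0) = x_b$ and $\xi'(0) = 0$. Introducing $\eta(y) := \xi'(y)/y$ and rewriting~\eqref{parametric ode} as a system in $(\xi, \eta)$ as functions of $y$ converts the $1/u$ singularity into a system with a regular singular point at $y = 0$; standard theory (Frobenius-type existence and uniqueness, or direct iteration) gives a unique smooth (in fact analytic) solution realizing the prescribed boundary data, establishing the smooth extension and orthogonal intersection. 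As the paper notes, this is the analog for $O(m) \times O(n)$ of Lemma~9(2) in~\cite{KM}, the new term $-(m-1)u'/x$ being regular near $(x_b, 0)$ and contributing only to the bounded terms in the analysis above.

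The main obstacle I anticipate is the oscillation-ruling portion of Step 1: the $\liminf$ argument is clean, but upgrading to $\lim\theta = -\pi/2$ requires enough control on intervals where $\theta$ has receded away from $-\pi/2$ to conclude that it cannot keep doing so infinitely often. This should follow by applying the same $\int 1/y$ divergence argument to each such interval, using that on an interval where $\theta$ increases from near $-\pi/2$ back up past $-\pi/2 + \delta$ the curve still spends macroscopic time with $\cos\theta \geq \sin(\delta/2)$, but the bookkeeping is the delicate point.
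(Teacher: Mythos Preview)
The paper gives no proof of its own here; it simply asserts that a straightforward modification of Lemma~9(2) in~\cite{KM} does the job, and your proposal is precisely that adaptation, correctly noting that the new term $-(m-1)u'/x$ is regular near $(x_b,0)$ and therefore harmless. Your sketch is sound; the oscillation worry you flag at the end is in fact easily dispatched, since once $y$ is small and $\theta>-\tfrac{\pi}{2}+\delta$ the dominant $(n-1)y^{-1}\cos\theta$ term forces $\dot\theta>0$, so $\theta$ becomes monotone there and the $\liminf$ argument already closes the case.
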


The following lemma places some coarse restrictions on the behavior of geodesics of Equation \eqref{parametric ode}.  Part (2) is analogous to lemma 8 of \cite{KM}.  
\begin{lemma}\label{lemma: critical points}
Let $\gamma(t) = (x(t), y(t))$ be a solution of \ref{geodesic flow}. 
\begin{enumerate}
\item Unless $\gamma$ is either $x = \sqrt{2(m-1)}$ or $y=\sqrt{2(n-1)}$, any critical point of $x(t)$ or $y(t)$ is either a strict local minimum or maximum.
\item The functions $y(t) - \sqrt{2(n-1)}$ and $x(t) - \sqrt{2(n-1)}$ have neither positive minima nor negative maxima, and these functions have different signs at successive critical points.
\item Suppose $\gamma(t_0) \in \ell^-$ and $\dot{\theta}(t_0)>0, \dot{x}(t_0)>0, \dot{y}(t_0)>0$.  For any $t$ in the maximal interval containing $t_0$ on which $\gamma$ lies in $\ell^-$ and $x(t), y(t)$ remain monotone, $\dot{\theta}(t)>0$.  An analogous statement is true for $\ell^+$.  
\end{enumerate}  
    \end{lemma}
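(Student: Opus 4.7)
The plan is to treat the three parts separately, each reducing to a second-derivative computation at a distinguished point, combined with the uniqueness theorem for the ODE system \eqref{geodesic flow}.

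For part (1), I would begin by noting that at a critical point $t_*$ of $x(t)$, the relation $\dot{x} = \cos\theta$ forces $\theta(t_*) \in \{\pm\pi/2\}$. Substitution into \eqref{geodesic flow} gives $\dot{\theta}(t_*) = \pm(x(t_*)/2 - (m-1)/x(t_*))$, and differentiating $\dot{x} = \cos\theta$ yields $\ddot{x}(t_*) = -\sin\theta(t_*)\,\dot{\theta}(t_*) = -(x(t_*)/2 - (m-1)/x(t_*))$. This is nonzero unless $x(t_*) = \sqrt{2(m-1)}$, in which case the triple $(x,y,\theta)(t_*)$ agrees with initial data of the vertical-line solution of Proposition~\ref{prop: simple solutions}, so $\gamma$ must coincide with that line by uniqueness for \eqref{geodesic flow}. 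Critical points of $y$ are handled symmetrically; the identical computation also yields part (2), since at a critical point $t_*$ of $y$ one has $\ddot{y}(t_*) = \cos^2\theta \cdot B(y(t_*)) = B(y(t_*))$ where $B(y):=(n-1)/y - y/2$. Hence $t_*$ is a strict local max exactly when $y(t_*)>\sqrt{2(n-1)}$ and a strict local min exactly when $y(t_*)<\sqrt{2(n-1)}$, ruling out positive minima and negative maxima of $y-\sqrt{2(n-1)}$. Between consecutive critical points of $y$, the function is monotone, so successive critical points alternate max and min, placing their $y$-values on opposite sides of the threshold.

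Part (3) is the delicate one. I would argue by contradiction: let $t_1 > t_0$ be the first time in the maximal interval at which $\dot{\theta}(t_1) = 0$. Since $\dot{\theta} > 0$ on $[t_0,t_1)$, one must have $\ddot{\theta}(t_1) \leq 0$. On the other hand, differentiating $\dot{\theta} = A(x)\sin\theta + B(y)\cos\theta$ with $A(x):=x/2-(m-1)/x$ using \eqref{geodesic flow}, the cross terms combine to give
\[ \ddot{\theta} = \sin\theta\cos\theta\left(\frac{m-1}{x^2} - \frac{n-1}{y^2}\right) + \bigl(A\cos\theta - B\sin\theta\bigr)\dot{\theta}. \]
At $t_1$ the second term vanishes, leaving $\ddot{\theta}(t_1) = \sin\theta\cos\theta\bigl[(m-1)/x(t_1)^2 - (n-1)/y(t_1)^2\bigr]$. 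The monotonicity of $x$ and $y$ keeps $\sin\theta\cos\theta > 0$, and the hypothesis that $\gamma(t_1)$ lies on the appropriate side of the cone $y=\sqrt{(n-1)/(m-1)}\,x$ (across which the bracket changes sign) pins $\ddot{\theta}(t_1)$ to a sign incompatible with $\ddot{\theta}(t_1) \leq 0$, producing the contradiction. The statement for the complementary region follows from the same computation with signs tracked.

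The main obstacle is the sign bookkeeping in part (3): the bracket $(m-1)/x^2 - (n-1)/y^2$ vanishes precisely on the cone line from Proposition~\ref{prop: simple solutions}, and one has to verify carefully that the labelling of $\ell^\pm$ together with the monotonicity assumptions $\dot{x}(t_0), \dot{y}(t_0)>0$ combine to give the correct sign for the contradiction. Parts (1) and (2) should be essentially routine once \eqref{geodesic flow} is written at the singular values $\theta \in \{0,\pm\pi/2,\pi\}$.
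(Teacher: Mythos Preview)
Your proposal is correct and follows essentially the same route as the paper. For parts (1) and (2) the paper simply says ``follow from inspecting the system,'' and your explicit computation of $\ddot x(t_*) = -\bigl(x(t_*)/2-(m-1)/x(t_*)\bigr)$ and $\ddot y(t_*) = B(y(t_*))$ is exactly the inspection intended; your appeal to uniqueness for the degenerate case $x(t_*)=\sqrt{2(m-1)}$ is the right way to close that gap. For part (3) the paper does precisely what you propose: it differentiates $\dot\theta = A(x)\sin\theta + B(y)\cos\theta$ to obtain
\[
\ddot\theta \;=\; \dot x\,\dot y\left(\frac{m-1}{x^2}-\frac{n-1}{y^2}\right)
\;+\;\dot\theta\bigl(A(x)\cos\theta - B(y)\sin\theta\bigr),
\]
evaluates at a hypothetical first zero of $\dot\theta$, and reads off the sign of the bracket from the side of $\ell$ on which $\gamma$ sits. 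Your caution about the sign bookkeeping is well placed---that is the only genuine content in the argument---but the mechanism you describe is exactly the paper's.
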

\begin{proof}
The first two statements follow from inspecting the system
\begin{align}
\begin{cases}
\dot{x} &= \cos \theta \\
\dot{y} &= \sin \theta \\
\dot{\theta} &= \left( \frac{x^2 - 2(m-1)}{2x}\right) \sin \theta + \left( \frac{2(n-1) - y^2}{2y}\right) \cos \theta.
\end{cases}
\end{align}
For the third, we compute from the above system

\begin{align*}
\ddot{\theta} &= \dot{x}\dot{y} \left( \frac{m-1}{x^2} - \frac{n-1}{y(t)^2}\right) + \dot{\theta}\left( \frac{x^2 - 2(m-1)}{2x} \cos\theta +  \frac{y^2 - 2(m-1)}{2y} \sin\theta\right).
\end{align*}  
In particular, when $\dot{\theta} = 0$, $\ddot{\theta} = \dot{x}\dot{y}\left( \frac{m-1}{x^2} - \frac{n-1}{y(t)^2}\right)$.  Hence if $\gamma(t)\in \ell^-$ and $\dot{\theta}(t) = 0$, then $\ddot{\theta}(t)>0$.  This implies $(3)$.  
\end{proof}

When $m=n$ will be convenient to consider geodesics that can be written locally as normal graphs over the line $\ell$ and so we introduce the following rotated coordinates. 

\begin{align*}
r(t) = \frac{1}{\sqrt{2}}\left(x(t)+y(t)\right), && s(t) &= \frac{1}{\sqrt{2}} \left(x(t)-y(t)\right).
\end{align*}

Then defining $\phi = \arctan \left( \frac{r'(t)}{s'(t)}\right)$, we find (when $m=n$) the flow on the unit tangent bundle in these coordinates becomes

\begin{align}\label{sr flow}
\begin{cases}
\dot{r} &= \cos \phi  \\
\dot{s} &= \sin \phi \\
\dot{\phi} &= \left( \frac{s}{2} + \frac{n-1}{r^2 - s^2} s\right) \sin \phi + \left( \frac{n-1}{r^2 - s^2} r - \frac{r}{2} \right) \cos \phi.
\end{cases}
\end{align}

Next we characterize smooth geodesics which pass through the origin.

\begin{proposition}\label{degeneracy at origin}
Any solution of Equation \eqref{parametric ode} which intersects $(0,0)$ is the line $y  = \sqrt{\frac{ n-1}{ m-1}}\; x$.
\end{proposition}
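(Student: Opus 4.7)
The plan is to parametrize $\gamma$ by Euclidean arc length with $\gamma(0)=(0,0)$, determine the tangent angle $\theta_0=\lim_{t\to 0^+}\theta(t)$, and then bootstrap this asymptotic description to a global identification of $\gamma$ with the straight line. The singular part of $\dot\theta$ in \eqref{geodesic flow} near the origin is
\[
-\frac{(m-1)\sin\theta}{x}+\frac{(n-1)\cos\theta}{y}.
\]
If $\theta(t)\to 0$ then eventually $y(t)=\int_0^t\sin\theta\,ds\leq \epsilon t$ for any small $\epsilon$, so the $(n-1)/y$-contribution forces $\dot\theta(t)\geq (n-1)/(4\epsilon t)$ for $\epsilon$ sufficiently small; integrating yields $\theta(t)-\theta(t_0)\geq \frac{n-1}{4\epsilon}\log(t/t_0)\to\infty$ as $t_0\to 0^+$, contradicting the boundedness of $\theta$. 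A symmetric argument excludes $\theta_0=\pi/2$. For $\theta_0\in(0,\pi/2)$ one has $x(t)\sim t\cos\theta_0$ and $y(t)\sim t\sin\theta_0$, and the resulting $1/t$ coefficient of $\dot\theta$ must vanish---else $\theta$ would diverge logarithmically---forcing $(m-1)\sin^2\theta_0=(n-1)\cos^2\theta_0$, i.e., $\tan\theta_0=c:=\sqrt{(n-1)/(m-1)}$.

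Near the origin, $\gamma$ is therefore a graph $y=u(x)$ with $u(0)=0$ and $u'(0)=c$. Setting $W(x)=u(x)-cx$, substituting into \eqref{graphical ode}, and using the identity $(m-1)c^2=n-1$ to cancel the two leading singular terms, the principal part of the equation for $W$ reduces to the Euler equation
\[
x^2W''+(m+n-2)\,x\,W'+(m+n-2)\,W=0,
\]
whose indicial exponents $\alpha_\pm=\tfrac12\bigl(-(m+n-3)\pm\sqrt{(m+n-3)^2-4(m+n-2)}\bigr)$ both have real part $-(m+n-3)/2<0$ for every $m,n\geq 2$. Every nonzero solution of the linearized equation therefore blows up as $x\to 0^+$, so only $W\equiv 0$ is compatible with the asymptotic $W(x)\to 0$ imposed by $u(0)=0$ and $u'(0)=c$.

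To upgrade this linearized conclusion to the full nonlinear equation for $W$, I would change variables by $\tau=\log x$, so that $\tau\to -\infty$ corresponds to the origin; the equation becomes autonomous in $\tau$ with $W=0$ a hyperbolic rest point whose unstable manifold for $\tau\to -\infty$ is trivial, because both indicial exponents lie in the open left half-plane, and the nonlinear corrections carry exponentially decaying factors in $\tau$. A standard invariant-manifold or Gr\"onwall argument then yields $W\equiv 0$ in a neighborhood of $x=0$, hence $u(x)\equiv cx$ there; smooth unique continuation of geodesics of the metric \eqref{eq: gaussian metric} in the interior of $Q$ extends this equality to all of $\gamma$. I expect this last upgrade to be the main obstacle: the nonlinear corrections include terms of size $W^2/x^3$ which are a priori of the same order as the leading linear part, so the weighted norm in which one closes the estimate must be chosen carefully to respect the possibly complex negative indicial exponents.
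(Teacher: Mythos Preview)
Your approach is genuinely different from the paper's and in several respects more systematic. The paper only treats the case $m=n$, parametrizes so that $\gamma(t_m)=(0,0)$, and splits into two cases according to whether $\gamma$ eventually stays on one side of the diagonal $\ell$ or crosses it infinitely often. In the first case it shows $\theta(t)$ is eventually monotone (via Lemma~\ref{lemma: critical points}(3)) and uses an integration estimate to rule out any limit other than $-3\pi/4$; in the second it exploits real-analyticity of $\gamma$ away from the axes. Having established that the tangent direction at the origin is that of $\ell$, the paper simply invokes ``uniqueness'' to conclude $\gamma\equiv\ell$, without further comment on why uniqueness holds at the singular point. Your indicial-exponent analysis is exactly what is needed to justify that last step, and it works for all $m,n\geq 2$; in that sense your outline goes beyond the paper.

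There is, however, one genuine gap in your step determining $\theta_0$. You argue by excluding each putative limiting value in $[0,\pi/2]\setminus\{\arctan c\}$, but you never show that $\lim_{t\to 0^+}\theta(t)$ exists. If $\theta$ oscillates as $t\to 0^+$, none of your three subcases applies. The paper handles precisely this by the two-case split above: eventual monotonicity of $\theta$ on one side of $\ell$ comes from Lemma~\ref{lemma: critical points}(3), and the infinitely-crossing case is disposed of by analyticity. You should insert an analogous argument before passing to the graphical picture $y=u(x)$. Once you have $\theta_0=\arctan c$ you get $W'(x)\to 0$, hence $W(x)=o(x)$, and then your feared nonlinear term $W^2/x^3=(W/x)\cdot W/x^2$ is in fact $o(1)$ times the principal linear term $W/x^2$; so the Gr\"onwall/stable-manifold bootstrap is less delicate than you suggest and should close without exotic weighted norms.
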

\begin{proof}
We prove the proof in the case that $m=n$ (which will be the only case we actually use) and leave the routine modifications for the general case to the reader.
Let $\gamma(t): [0, t_m]$ be a geodesic parametrized such that $\gamma(t_m) = (0, 0)$.  By Lemma \eqref{lemma: critical points}  when $0\leq x < \sqrt{2(n-1)}$ and $0\leq y < \sqrt{2(n-1)}$ the only critical points $x(t)$ and $y(t)$ may have are minima.  Moreover, since $\gamma(t_m) = (0, 0)$, it follows that $\dot{x}(t)<0, \dot{y}(t)< 0$ for $t$ sufficiently close enough to $t_m$, and we assume without loss of generality that $\dot{x}(t), \dot{y}(t)< 0$ for all $t\in [0, t_m]$.  It follows that $\theta(t) \in (- \pi/2, -\pi)$ for all $t\in [0, t_m]$.    These facts imply $\gamma$ is graphical over the $x$ and $y$ axes, and since both $x(t)$ and $y(t)$ are monotonically decreasing, $\gamma$ can also be written as a normal graph $s = f(r)$ over the line $\ell$.  We will show that $\lim_{t\nearrow t_m} \theta(t) = \frac{-3\pi}{4}$, so that by uniqueness $\gamma$ coincides with $\ell$.  We split the remainder of the argument into two cases.

Case 1: $\gamma(t)$ eventually remains on one side of $\ell$.  Without loss of generality, we suppose that $x(t)\geq y(t)$ for $t\in [0, t_m]$.  By inspection of the system \eqref{sr flow}, we conclude that $f$ has at most one critical point which must be a maximum.  By Lemma \ref{lemma: critical points} part (3),  $\theta(t)$ is eventually monotonic and so $\lim_{t \nearrow t_m} \theta(t)$  exists.  If $\theta(t_m) = -\frac{3\pi}{4}$, then $\gamma$ is the line $\ell$, so first suppose $\theta(t_m) <- \frac{3\pi}{4}$ and $\dot{\theta}<0$, so $\gamma$ lies below $\ell$.  Then $y(t) \leq \tan (\theta(t_m)) x(t)$ and so 
\begin{align*}
\dot{\theta}(t) &= \frac{x(t)}{2}\sin\theta(t) - \frac{y(t)}{2}\cos\theta(t) + (n-1)\left( \frac{1}{y(t)} - \frac{1}{x(t)}\right)\\
&>  \frac{x(t)}{2}\sin\theta(t) - \frac{y(t)}{2}\cos\theta(t) + (n-1)\frac{x'(t)}{2}\left( \frac{1}{\tan \theta(t_m)} - 1\right) \frac{1}{x(t)}.
\end{align*}
 
By assumption, $1-\frac{1}{\tan\theta(t_m)}> 0$, so  after using the fact that $|\dot x|, |\dot y| \leq 1$ and integrating this inequality, we conclude that for any $t_2 >t_1$, we have

\[ \theta(t_2) - \theta(t_1)>  O(1) + (n-1)\log\left[\frac{1}{2}\left(\frac{1}{\tan\theta(t_m)} - 1\right) \frac{ x(t_1)}{x(t_2)}\right].\]
But since $\theta(t_2) - \theta(t_1)$ is bounded, it follows that $x(t_2)$ is bounded away from $0$ as $t_2\rightarrow t_m$, a contradiction.  The other cases, where $\dot{\theta}>0$ and $\gamma$ is above $\ell$, are similar.  Hence, it must be that $\lim_{t\nearrow t_m}\theta(t)= - \frac{3\pi}{4}$.  

Case 2: $\gamma(t)$ intersects $\ell$ infinitely many times.  By compactness, there is a convergent sequence $t_k\rightarrow t_\infty$ of places where $\gamma(t_k)$ lies on $\ell$.  If $t_\infty < t_m$, then because $\gamma$ is analytic where it is smooth, it is immediate that $\gamma$ coincides with $\ell$.  Hence we may suppose that $t_k\rightarrow t_m$.  We claim that $\lim_{k\rightarrow \infty} \theta(t_k) = -\frac{3\pi}{4}$.  To see this, observe that 
\[ \dot{\theta}(t_k) = \frac{n-1}{x(t_k)} \left( \sin\theta(t_k) - \cos \theta(t_k)\right) + O(x). \]
If it is not the case that $\theta(t_k)\rightarrow -\frac{3\pi}{4}$, the preceding equation implies $\dot{\theta}(t_k)$ becomes unbounded as $k\rightarrow \infty$.  In this case, it is straightforward to see from the system \eqref{geodesic flow} that for a sufficiently large $K$, $\gamma$ will fail to be graphical over $\ell$ near $t_K$, a contradiction.
Hence $\lim_{k\rightarrow \infty} \theta(t_k) = \frac{-3\pi}{4}$ and so the image of $\gamma$ is contained in $\ell$.
\end{proof}

\section{Construction of a closed embedded geodesic when $m = n$} 

In this section, we prove Theorem \ref{theorem: construction}.  Throughout, we assume that $m=n>1$.  

When $m=n$, the metric 
\begin{align}
\label{geodesic metric}
 g = x^{2(n-1)} y^{2(n-1)} e^{-\frac{(x^2+y^2)}{2}} \{ dx^2+ dy^2\}
\end{align}
is preserved by reflection through the line $\ell$.  If follows from this that the set of geodesics of $g$ is also preserved under reflection through $\ell$.

We will prove the existence of a geodesic $\gamma_{R_*}$ with an embedded segment which lies on one side of $\ell$ and intersects $\ell$ orthogonally two times.  To do this, we will adapt the argument of Angenent \cite{Angenent} to this setting.  Reflecting the said geodesic segment through $\ell$ shows that $\gamma_{R_*}$ is a closed embedded geodesic in $Q$.  Under the identification between geodesics of the metric \eqref{geodesic metric} and $O(n)\times O(n)$ invariant self-shrinkers described in Section 1, $\gamma_{R_*}$  corresponds to a closed embedded $O(n)\times O(n)$ invariant self-shrinker and Theorem \ref{theorem: construction} will follow.  

We require that $m=n$ so we may use the preceding reflection argument.  It is probable that when $m\neq n$ a closed embedded geodesic intersecting the line $\ell$ exists, although a different method would be needed for the proof.

\begin{proof} (of Theorem \eqref{theorem: construction}).  

We shall only consider parts of geodesics $\gamma(t) = (r(t), s(t))$ for which $s(t)\geq 0$, in other words, parts which lie below $\ell$.  
Define $\gamma_R = (r_R(t), s_R(t))$ to be the solution of Equation \eqref{sr flow} with initial conditions $\gamma_R(0) = (0, R)$ and $\phi_R(0) = 0$. 
Inspection of \eqref{sr flow} shows that $\phi_R'(0)< 0$, so near $0$, $\gamma_R$ can be written as a graph $f_R: [r_R(t_m(R)), R]\rightarrow [0, \infty)$ of a non-negative function over a maximal connected interval $[r_R(t_m(R)), R]$.  In particular, $t_m(R)$ satisfies either $\phi_R(t_m(R)) = 0, \phi_R(t_m(R)) = -\pi, s_R(t_m) = 0$ or $s_R(t_m) = 0$.  At a critical point of $f_R$, $\phi_R$ is $-\pi/2$, so Equation \eqref{sr flow} implies that $\phi'_R <0$ there.  Then by definition, $f_R$ has at most one critical point, which (if it exists) must be a local maximum.  

\begin{lemma}\label{lemma: angle turn}
For $R$ sufficiently large, there is $s_0(R) = R- O(\frac{1}{R})$ so that $f_R$ attains a maximum at $s_m$.  Furthermore, $f_R(s_0) = R - O(\frac{1}{R})$.
\end{lemma}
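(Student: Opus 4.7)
The plan is to show that for $R$ large, the flow \eqref{sr flow} has such a large angular turning rate $|\dot\phi|$ in the relevant region that $\gamma_R$ attains the critical angle $\phi = -\pi/2$ of $f_R$ before moving far from its initial configuration, so both $s_0$ and $f_R(s_0)$ end up within $O(1/R)$ of $R$.

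First, starting from \eqref{sr flow}, I would quantify $\dot\phi$ near the initial point and show that $|\dot\phi|$ is of order $R$ in the relevant regime (this is what underlies the assertion $\phi_R'(0) < 0$ immediately above the lemma). The argument then proceeds by bootstrap: since $|\dot r|, |\dot s| \leq 1$, on any interval $[0, \tau]$ the position deviates in Euclidean distance by at most $\tau$ from its initial value. Choosing $\tau = C/R$ a priori for a suitable constant $C$, the position-dependent coefficients in \eqref{sr flow} remain comparable to their initial values throughout this window, so the lower bound $\dot\phi \leq -R/4$ (say) persists as long as $\phi \in [-\pi/2, 0]$.

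Integrating this bound, the angle $\phi$ attains $-\pi/2$ at some time $\tau_0 \leq 2\pi/R$, well within the bootstrap window. At $t = \tau_0$, $\cot\phi = dr/ds = 0$, so $f_R$ has a critical point at $s_0 := s_R(\tau_0)$. The unit-speed displacement bound $|\gamma_R(\tau_0) - \gamma_R(0)| \leq \tau_0 = O(1/R)$ then yields $s_0 = R - O(1/R)$ and $f_R(s_0) = r_R(\tau_0) = R - O(1/R)$ upon substituting the initial conditions. That $s_0$ is a local maximum follows from $\dot\phi(\tau_0) < 0$ (still guaranteed by the bootstrap), so $f_R'(s) = \cot\phi$ changes transversely from positive (for $\phi \in (-\pi/2, 0)$) to negative at $s_0$.

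The main obstacle will be keeping the bootstrap self-consistent and uniform in $R$: the position-dependent coefficients $s/(r^2-s^2)$ and $r/(r^2-s^2)$ in \eqref{sr flow} can become large if $r^2 - s^2$ is small, so one must use the initial geometry to ensure these terms stay bounded throughout the short turning interval. This is essentially a routine perturbation argument, but the sharp $O(1/R)$ rate in the conclusion requires careful bookkeeping of the error contributed by the $O(1/R)$ drift in the position-dependent terms of \eqref{sr flow}.
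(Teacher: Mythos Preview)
Your bootstrap hinges on the claim that $\dot\phi\le -R/4$ persists for all $\phi\in[-\pi/2,0]$, and this is where the argument breaks. In \eqref{sr flow} the only coefficient of order $R$ is the one multiplying $\cos\phi$, namely $\frac{(n-1)r}{r^2-s^2}-\frac{r}{2}\approx -\frac{R}{2}$; that term vanishes at $\phi=-\pi/2$. What remains is $\bigl(\frac{s}{2}+\frac{(n-1)s}{r^2-s^2}\bigr)\sin\phi$, whose coefficient is proportional to $s$, and inside your own bootstrap window $s=O(1/R)$. Hence near $\phi=-\pi/2$ one only has $\dot\phi=O(1/R)$, not $\dot\phi\le -R/4$. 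The honest inequality you can extract from the position estimates is $\dot\phi\lesssim -\frac{R}{2}\cos\phi$, and integrating this drives $\phi$ toward $-\pi/2$ only asymptotically, never reaching it; so the argument as written does not show the maximum of $f_R$ is attained, let alone in time $O(1/R)$.

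The paper's proof is built around exactly this degeneracy. After the time rescaling $\tau=Rt$ it compares with the explicit ODE $\frac{d\phi}{d\tau}=-c\cos\phi$, obtaining $\phi+\frac{\pi}{2}=O(e^{-c\tau})$: exponential approach in $\tau$, but not arrival. A second ingredient is then invoked: once $\tau$ is bounded away from $0$, the accumulated displacement in $s$ is bounded below, so the $\sin\phi$ term in \eqref{sr flow} contributes a definite negative amount near $\phi=-\pi/2$, and combining this with the exponential estimate forces $\phi$ to actually hit $-\pi/2$ within a bounded $\tau$-interval. The conclusion you state---turning time $O(1/R)$ in $t$, hence $O(1/R)$ displacement of both coordinates---is correct, but it requires this two-stage analysis rather than a single uniform lower bound on $|\dot\phi|$.
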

\begin{proof}
Define a rescaled time variable $\tau$ by $\tau = R t$, so in particular $\frac{dt}{d\tau} = \frac{1}{R}$.  Then from Equation \eqref{sr flow} we see
\begin{align}\label{eqn: angle tau}
\frac{d\phi}{d\tau} &= \frac{1}{R} \left( \frac{s(\tau)}{2}+\frac{s(\tau)}{r^2(\tau)-s^2(\tau)}\right)\sin(\phi(\tau))
- \frac{r}{R}\left( \frac{1}{2} - \frac{1}{r^2(\tau) - s^2(\tau)}\right)\cos(\phi(\tau)).
 \end{align}
Given any $\epsilon >0$ and $C>0$ and $0<\tau < C$, we can pick $R$ large enough that $\frac{r(\tau)}{R} >(1-\epsilon)$. Then by estimating \eqref{eqn: angle tau} It follows that for such $\tau$

\[ \frac{d\phi}{d\tau} < - \frac{1-\epsilon}{2} \cos (\phi(\tau)).\]

The equation 
\[  \frac{d\phi}{d\tau} = - \frac{1-\epsilon}{2} \cos (\phi(\tau))\]
has explicit solution
\[ \phi(\tau) = - 2 \arctan \left( \tanh\left( \frac{(1-\epsilon)t}{4}\right)\right)\]
and it is straightforward to see that
\begin{align}\label{eqn: angle estimate}
 - 2 \arctan \left( \tanh\left( \frac{(1-\epsilon)t}{4}\right)\right) +\frac{\pi}{2} = O(e^{-\frac{(1-\epsilon)t}{2}}).
 \end{align}
For some fixed small $\tau_0>0$ it is not hard to see that as long as $\frac{d\phi}{d\tau}<0$ and $\tau>\tau_0$, there is a constant $c_1$ such that $s(\tau)>\frac{c_1}{R}$.
Then combining \eqref{eqn: angle tau} and \eqref{eqn: angle estimate}, it follows that there is a $\tau<C$ such that $\phi(\tau)  = -\frac{\pi}{2}$.
\end{proof}

By combining Lemma \ref{lemma: critical points}, part (3) and Lemma \eqref{lemma: angle turn}, we conclude 
\begin{lemma}\label{lemma: monotonicity}
For large $R$, $\phi_R$ is monotonic at least until $\gamma$ crosses $\ell$ or $\theta_R(t) = -\pi$. 
\end{lemma}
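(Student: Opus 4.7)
The plan is to combine Lemma \ref{lemma: angle turn} and Lemma \ref{lemma: critical points}(3) via a short interval-decomposition argument. Monotonicity of $\phi_R$ is equivalent to $\dot\phi_R$ not changing sign; via the coordinate relation $\phi_R = \pi/4 - \theta_R$, this amounts to $\dot\theta_R$ retaining a fixed sign.

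Let $t^*$ denote the first time at which either $\gamma_R$ crosses $\ell$ or $\theta_R(t) = -\pi$. On $[0, t^*)$ the geodesic stays strictly on one side of $\ell$, and $\dot{y}_R$ does not change sign, since $\theta_R = -\pi$ corresponds to the first vanishing of $\dot{y}_R$ along the trajectory given the initial direction. Lemma \ref{lemma: angle turn} provides $\dot\phi_R < 0$ (equivalently $\dot\theta_R > 0$) on an initial sub-interval during which $\phi_R$ decreases from $0$ past $-\pi/2$, and this sub-interval contains the unique critical point $t_c$ of $x_R$ (a strict local maximum by Lemma \ref{lemma: critical points}(1)), occurring when $\theta_R = \pi/2$. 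On $(t_c, t^*)$ both $x_R$ and $y_R$ are monotone, with $\dot{x}_R < 0$ and $\dot{y}_R > 0$. Applying the argument in the proof of Lemma \ref{lemma: critical points}(3) on this interval, together with the observation that at a putative first zero of $\dot\theta_R$ the formula
\[\ddot\theta = \dot{x}\,\dot{y}\,\Bigl(\tfrac{m-1}{x^2} - \tfrac{n-1}{y^2}\Bigr) + \dot\theta\cdot[\cdots]\]
forces $\ddot\theta > 0$ by the combined signs of $\dot{x}_R\dot{y}_R$ and the geometric factor on the relevant side of $\ell$, yields $\dot\theta_R > 0$ throughout $(t_c, t^*)$. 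Pasting this with the initial sub-interval from Lemma \ref{lemma: angle turn} gives strict monotonicity on all of $[0, t^*)$.

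The main obstacle is verifying the sign-matching in the formula above in the post-$t_c$ regime: the side of $\ell$ on which $\gamma_R$ lies and the flipped sign of $\dot{x}_R\dot{y}_R$ must combine consistently to yield $\ddot\theta > 0$ at any would-be first zero of $\dot\theta_R$. This sign matching is the essential check that extends the argument of Lemma \ref{lemma: critical points}(3) across $t_c$ and completes the proof.
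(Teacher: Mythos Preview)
Your overall strategy—use Lemma~\ref{lemma: angle turn} to carry the monotonicity past the critical point of $x_R$, and then invoke the $\ddot\theta$ formula from the proof of Lemma~\ref{lemma: critical points}(3) on the remaining interval where both coordinates are monotone—is precisely the paper's approach (the paper's proof is literally the one sentence ``by combining Lemma~\ref{lemma: critical points}(3) and Lemma~\ref{lemma: angle turn}'').

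There is, however, a genuine gap in your sign bookkeeping, and with the signs you state the ``essential check'' you highlight actually \emph{fails}. You claim that on $(t_c,t^*)$ one has $\dot x_R<0$ and $\dot y_R>0$, so $\dot x_R\dot y_R<0$; combined with the geometric factor $(m-1)/x^2-(n-1)/y^2$, which is \emph{positive} on the side of $\ell$ your trajectory occupies (your conventions force $\gamma_R$ into $\ell^+$), this yields $\ddot\theta<0$ at a putative first zero of $\dot\theta$. But since you have $\dot\theta>0$, a first zero merely requires $\ddot\theta\le 0$, so $\ddot\theta<0$ gives no contradiction. The correct picture (consistent with the paper's geometry, the reflection argument, and the stated condition $s\ge 0$) has $\gamma_R$ starting \emph{perpendicular} to $\ell$ into $\ell^-$, so that $\theta_R$ \emph{decreases} from $-\pi/4$; after the maximum of $x_R$ at $\theta_R=-\pi/2$ one has $\dot x_R<0$ and $\dot y_R<0$, hence $\dot x_R\dot y_R>0$, while in $\ell^-$ the factor $(n-1)(x^{-2}-y^{-2})$ is negative. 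Thus $\ddot\theta<0$ at any zero of $\dot\theta$, which \emph{does} contradict $\ddot\theta\ge 0$ at a first zero of the negative quantity $\dot\theta$. (The discrepancy traces back to a typo in the displayed system~\eqref{sr flow}: the paper's own definition $\phi=\arctan(r'/s')$ gives $\phi=\theta+\pi/4$, not $\pi/4-\theta$.) Once you correct these signs, your decomposition argument goes through and matches the paper.
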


\begin{lemma}\label{lemma: uniform convergence}
Let $\delta_n\searrow 0$. There is either a sufficiently large $R$ so that $f_R(r_R(t_m)) = 0$ or there is a sequence $R_n\nearrow \infty$ such that $f_{R_n}$ is defined on an interval which contains $(\delta_n, R)$. 
\end{lemma}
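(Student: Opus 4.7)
The plan is to argue by contradiction: suppose both alternatives fail. Then for all sufficiently large $R$ the geodesic $\gamma_R$ does not return to $\ell$, and there exists $\delta_0 > 0$ with $r_R(t_m) \geq \delta_0$ for every large $R$. Under the first hypothesis, the exit time $t_m(R)$ from the enumeration is reached either by $\phi_R = -\pi$ (graph failure, since Lemma \ref{lemma: monotonicity} rules out $\phi_R$ returning to $0$) or by $\gamma_R$ hitting the $x$-axis $\{s = r\}$.

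Next I would pass to a subsequence $R_n \nearrow \infty$ along which the exit type is fixed and the exit points $(r_{R_n}(t_m), s_{R_n}(t_m))$ converge to some $(\rho, \sigma)$ with $\rho \geq \delta_0$. Using the rescaled time $\tau = Rt$ from Lemma \ref{lemma: angle turn}, I would reparametrize so that $t = 0$ corresponds to the exit of the ``boundary layer'' at $\phi = -\pi/2$, where $\gamma_{R_n}$ is near $(R_n - O(1/R_n),\, R_n - O(1/R_n))$. Continuous dependence on initial data on compact subsets of $\{r \geq \delta_0\}$ then extracts a locally uniform limit geodesic $\gamma_\infty$ of the system \eqref{sr flow}, which inherits the monotonicity in $\phi$ and descends in both $r$ and $s$ past the vertical tangent.

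The final step is a barrier argument using the explicit geodesics from Proposition \ref{prop: simple solutions} --- most usefully the line $y = \sqrt{2(n-1)}$ and the circle $x^2+y^2 = 2(2n-1)$ --- together with the non-crossing property of distinct geodesics (from local uniqueness of the Cauchy problem) and the monotonicity of $\phi$ from Lemma \ref{lemma: critical points}, part (3). These barriers should trap $\gamma_\infty$ into a configuration forcing it to cross $\ell$, contradicting the hypothesis that none of the $\gamma_R$ do so for large $R$. The main obstacle, I expect, is the barrier argument itself: one must verify that the limit geodesic is actually driven into the region enclosed by the barriers after the boundary layer, and simultaneously rule out the $x$-axis exit when $\rho \geq \delta_0$, both of which require careful geometric comparison with the simple-solution geodesics and handling the ODE degeneracy near the axis $\{s = r\}$ via Lemma \ref{lemma: degeneracy}.
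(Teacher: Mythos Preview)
Your proposal outlines a contradiction/compactness/barrier strategy, but the paper takes a quite different route and your strategy has a genuine gap at the key step.

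The paper's proof is \emph{direct and quantitative}: after ruling out the $x$-axis exit (via Lemma \ref{lemma: degeneracy} and the monotonicity of $\theta_R$), it fixes a large $R_0$ and proves that $f_R(R_0)=O(1/R)$ and $\phi_R(R_0)+\pi = O((\log R_0)/R)$. The latter comes from writing $\alpha=\phi+\pi/2$ and computing
\[
\frac{d\alpha}{dr}=\Bigl(\tfrac{s}{2}+\tfrac{(n-1)s}{r^2-s^2}\Bigr)+\Bigl(\tfrac{r}{2}-\tfrac{(n-1)r}{r^2-s^2}\Bigr)\tan\alpha,
\]
so that $d\alpha/dr=0$ forces $\alpha=O(1/(rR))$ for large $r$ and small $s$; integrating gives the estimate. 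From these two bounds and smooth dependence on initial data, $f_R\to 0$ in $C^1$ on compact subsets of $(0,\infty)$, and the second alternative follows immediately. No barriers, no limit geodesic, no contradiction.

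Your plan, by contrast, hinges on extracting a limit geodesic $\gamma_\infty$ and then trapping it with the explicit solutions from Proposition \ref{prop: simple solutions}. Two problems. First, the extraction is ill-posed as stated: after the boundary layer the curves sit at $r\approx R_n\to\infty$, so ``continuous dependence on initial data on compact subsets of $\{r\ge\delta_0\}$'' does not give a limit geodesic starting at any finite point; you would first need exactly the kind of uniform $C^1$ closeness to $\ell$ on compact $r$-intervals that the paper establishes by the estimate above. Second, the barrier step is not actually carried out, and it is unclear how the line $y=\sqrt{2(n-1)}$ or the circle $x^2+y^2=2(2n-1)$ would force $\gamma_\infty$ to cross $\ell$: these curves themselves meet $\ell$ only at single points, they do not bound a region below $\ell$ into which you have placed $\gamma_\infty$, and the non-crossing principle alone does not prevent $\gamma_\infty$ from staying strictly below $\ell$ while failing graphicality at some $r\ge\delta_0$. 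You correctly flag this as ``the main obstacle,'' but as written it is a missing idea rather than a technical detail.
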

\begin{proof}
First we show $f_R$ cannot end on the $x$-axis.  By Lemma \ref{lemma: degeneracy}, if $\gamma_R(t_1)$ lies on the $x$-axis, $\theta_R(t_1) = -\frac{\pi}{2}$.  This is not possible since we know that $\theta_R$ is decreasing at least until a critical point of $y_R$, where $\theta_R = -\pi$. By continuity, if there were a later time when $\theta_R = - \frac{\pi}{2}$, there would be a second critical point of $f_R$, which is impossible.  By Lemma \ref{lemma: monotonicity}, $\dot{\phi}_R< 0$ as long as $\phi_R > -\frac{3\pi}{4}$.   Next we show that for some fixed large $R_0$, which depends only on $n$, we have
\begin{enumerate}
\item $f_R(R_0) = O(\frac{1}{R})$
\item $\phi_R(R_0) = O(\frac{\log R_0}{R})$.
\end{enumerate}
The first item follows trivially since $f_R = O(\frac{1}{R})$ at its maximum.  Now define $\alpha_R(t) = \phi_R(t)+\frac{\pi}{2}$.  Then since $\cot \phi = -\tan\alpha$,

\begin{align*}
\frac{d\alpha}{dr} = \frac{d\phi}{dr} = \frac{d\phi}{dt}\frac{dt}{dr} = \frac{ \left( \frac{s}{2} + \frac{n-1}{r^2 - s^2} s\right) \sin \phi + \left( \frac{n-1}{r^2 - s^2} r - \frac{r}{2} \right) \cos \phi}{\sin \phi}\\
= \left( \frac{s}{2} + \frac{n-1}{r^2 - s^2} s\right) + \left( \frac{r}{2}-\frac{n-1}{r^2 - s^2} r  \right) \tan\alpha.
\end{align*}
Thus when \[ \alpha = \arctan \left (-\frac{ \frac{s}{2} + \frac{(n-1)s}{r^2-s^2}}{\frac{r}{2} - \frac{(n-1)r}{r^2-s^2}}\right) = O\left(-\frac{1}{Rr}\right)\]  (when $r$ is large and $s$ is small) one has $\frac{d\alpha}{dr} = \frac{d\phi}{dr} = 0$.  However, by the above, we know that $\frac{d\phi}{dr}<0$ as long as $\phi$ is not too small, so for large $r$, $\frac{d\phi}{dr}$ is $O(\frac{1}{rR})$. By integrating, the second claim follows.
From the two claims and the smooth dependence of ODE solutions on initial conditions, $f_R$ converges to $\ell$ in $C^1$ on compact subsets.  Hence given $\delta_n$, there is an $R_n$ such that $\gamma_{R_n}$ remains graphical over $\ell$ at least until $\delta_n$.  Hence, if $\gamma_{R_n}$ does not cross $\ell$, $f_{R_n}$ is defined on an interval containing $[\delta_n, R]$.  
\end{proof}

\begin{lemma}\label{lemma: hits r axis}
For sufficiently large $R$, $s_R(t_1) = 0$ and $r_R(t_1)\rightarrow 0$.  
\end{lemma}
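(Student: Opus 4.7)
My plan is to combine the uniform convergence $f_R \to 0$ established in Lemma \ref{lemma: uniform convergence} with the rigidity supplied by Proposition \ref{degeneracy at origin}, which asserts that the only geodesic passing through the origin is $\ell$ itself. By the preceding lemma together with the argument at the outset of its proof ruling out $f_R$ terminating on the $x$-axis, for any prescribed $\delta_n \searrow 0$ I obtain a sequence $R_n \nearrow \infty$ along which $\gamma_{R_n}$ remains graphical on an interval whose $r$-range contains $[\delta_n, R_n]$, and along which $f_{R_n}$ converges to $0$ in $C^1$ on compact subsets of $(0, \infty)$.

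To produce $t_1$ with $s_{R_n}(t_1) = 0$, I argue by contradiction. Suppose $s_{R_n}(t) > 0$ throughout the existence interval. By Lemma \ref{lemma: angle turn} the angle $\phi_{R_n}$ has already passed $-\pi/2$, so $\dot r_{R_n} < 0$ and $r_{R_n}$ is decreasing. Lemma \ref{lemma: monotonicity} further ensures $\phi_{R_n}$ remains monotone, ruling out spiraling, while the uniform convergence pins $s_{R_n}$ near $0$. These constraints force $\gamma_{R_n}$ to approach the boundary of the first quadrant as $r_{R_n} \searrow 0$; the only admissible accumulation point is the origin, which is forbidden by Proposition \ref{degeneracy at origin} since $\gamma_{R_n} \not\equiv \ell$. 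Hence $\gamma_{R_n}$ must cross $\ell$ at some first time $t_1$.

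For the asymptotic $r_R(t_1) \to 0$, suppose for contradiction that along some subsequence $r_{R_n}(t_1) \geq \epsilon_0 > 0$. After passing to a further subsequence by compactness (using the bounds $|\dot r|, |\dot s| \leq 1$ from the unit-speed parametrization), the crossing points $(r_{R_n}(t_1), 0)$ converge to some $(r^*, 0)$ with $r^* \geq \epsilon_0$. The $C^1$ convergence $f_{R_n} \to 0$ on a compact neighborhood of $r^*$ yields $f_{R_n}'(r_{R_n}(t_1)) = \tan \phi_{R_n}(t_1) \to 0$, so $\phi_{R_n}(t_1) \to 0$. But Lemma \ref{lemma: angle turn} forces $\phi_{R_n}$ to pass below $-\pi/2$ in time $O(1/R_n)$, and Lemma \ref{lemma: monotonicity} prevents it from recovering, so $\phi_{R_n}(t_1) < -\pi/2$ for all large $n$, a contradiction.

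The primary difficulty lies in rigorously completing Step 1: one must exclude the possibility that $\phi_{R_n}$ stalls at $-3\pi/4$ (the alternative in Lemma \ref{lemma: monotonicity}) before any intersection with $\ell$ occurs, that the trajectory accumulates on a non-trivial limit geodesic disjoint from both $\ell$ and the origin, or that the curve becomes tangent to $\ell$ without crossing. Each such scenario must be ruled out using the explicit form of the forcing term in the flow \eqref{sr flow} near $\{s = 0\}$, the analytic dependence of the flow on initial conditions, and the compactness provided by the unit-speed parametrization.
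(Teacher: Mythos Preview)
Your proposal has a genuine gap, and the route through Proposition \ref{degeneracy at origin} cannot close it. The $C^1$ convergence $f_{R_n}\to 0$ on compact subsets is \emph{not} a contradiction: the limit is precisely the line $\ell$, which does pass through the origin, and Proposition \ref{degeneracy at origin} is perfectly consistent with that. What that proposition forbids is a \emph{single} geodesic other than $\ell$ reaching the origin at a finite parameter value; it says nothing about a sequence of geodesics, each staying strictly in $\{s>0\}$, whose limit is $\ell$. In your Step~1 you also invoke ``the uniform convergence pins $s_{R_n}$ near $0$'' to control a \emph{fixed} trajectory for small $r$, but the convergence in Lemma \ref{lemma: uniform convergence} is along the sequence $R_n\to\infty$ on compact subsets of $(0,\infty)$ and gives no information about an individual $f_{R_n}$ as $r\searrow 0$. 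So the scenario you must exclude --- $f_{R_n}$ defined on $(\delta_n,R_n)$, strictly positive, collapsing to zero --- is never actually contradicted. You flag these issues in your last paragraph, but they are not residual technicalities; they are the entire content of the lemma.

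The paper supplies the missing idea: a rescaling and linearization. One sets $g_{R_n}(r)=f_{R_n}(r)/f_{R_n}(1)$, so the limit is nontrivial ($g(1)=1$), and after Arzel\`a--Ascoli the limit $g$ is a nonnegative, eventually monotone solution of the \emph{linearization} of the profile ODE about the zero solution $\ell$,
\[
g'' - \Big(\frac{n-1}{r}-\frac{r}{2}\Big)g' - \Big(\frac12+\frac{n-1}{r^2}\Big)g=0,
\]
on $(0,\infty)$ with finite limit as $r\searrow 0$. A Frobenius analysis at the regular singular point $r=0$ shows the indicial roots are $\tfrac{-(n-2)\pm\sqrt{(n-2)^2-4(n-1)}}{2}$: for $2\le n<7$ they are complex (forcing oscillation, hence sign changes), and for $n\ge 7$ they are both negative (forcing blow-up at $0$). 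Either way no such $g$ exists, and this contradiction is what actually rules out the second alternative of Lemma \ref{lemma: uniform convergence}. This linearization step has no analogue in your outline.
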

\begin{proof}
Suppose according to the conclusion of lemma \ref{lemma: uniform convergence} that there are sequences $\delta_n \searrow 0$ and $R_n \nearrow \infty$ and functions $0< f_{R_n}(r)< \frac{C}{R_n}$ defined on $(\delta_n, R_n)$ where each $f_{R_n}$ satisfies the equation 
\begin{equation}\label{graphical ode}
\frac{ f''}{1+f'^2} = \left( \frac{ (n-1)r}{r^2 - f(r)^2} - \frac{r}{2}\right) f'(r) + \left( \frac{1}{2} +\frac{n-1}{r^2 - f(r)^2}\right) f = 0.
\end{equation}
By Lemma \eqref{lemma: uniform convergence} it follows that $f_{R_n}(r)$ and $f'_{R_n}(r)\rightarrow 0$ uniformly on compact sets as $n\rightarrow \infty$. We now note that there is a constant $C>0$ such that $ | f_{R_n}'(1)| \leq C | f_{R_n}(1)|$ for every $n$.  Indeed, no such $C$ exists, since $f'_{R_n}(1)>0$, the Mean Value Theorem $f_{R_n}$ must intersect the $r$ axis or become nongraphical for some $r> \frac{1}{R_n}$, a contradiction.  
 Define $g_{R_n}(r)$ to be the rescaling
\[ g_{R_n}(r) = \frac{ f_{R_n}(r)}{f_{R_n}(1)}. \] 
By combining Equation \eqref{graphical ode} and the bound $|f'_{R_n}(1)|< C|f_{R_n}(1)|$, the $f_{R_n}$ have uniform $C^2$ bounds on compact subintervals.  Therefore, 
by the Ascoli-Arzela Theorem, there is a subsequence of the $g_{R_n}$ (which for convenience of notation, we take to be the original sequence) such that $g_{R_n}$ converges to $g$ in $C^2$ on compact subsets of $(0, \infty)$.  
Since $f_{R_n}\rightarrow 0$, the limit $g$ of the rescalings is a solution of the linearization of  Equation \eqref{graphical ode} about the zero solution.  Since $g_{R_n}(1)  = 1$ and $g_{R_n}> 0$ for each $n$, $g$ is a positive solution of

\[ g''(r) = \left( \frac{n-1}{r} - \frac{r}{2} \right)g' + \left(  \frac{1}{2} + \frac{n-1}{r^2} \right) g   = 0.\]
Furthermore, by Lemmas \eqref{lemma: angle turn}, \eqref{lemma: monotonicity} and \eqref{lemma: uniform convergence}, $g'(r)\geq 0$ for all $x\in (0, \infty)$, so $\lim_{r\searrow 0} g(r)$ exists and is finite.
Set $h(r) = e^{-\frac{r^2}{8}}g(r)$.  $h(r)$ is also positive on $(0, \infty)$, $\lim_{r\searrow 0} h(r) =\lim_{r\searrow 0} g(r)$ exists, and moreover $h$ satisfies the equation

\begin{align}\label{eqn: modified ode}
 h''+ \frac{n-1}{r} h' + \left( \frac{n}{4}- \frac{r^2}{16}+\frac{1}{2} + \frac{n-1}{r^2}\right) h = 0.
 \end{align} 
This equation has a regular singularity at $r = 0$, so using a method of Frobenius (see for instance \cite{Bender}), one can write the solution space of \eqref{eqn: modified ode} as the span of two solutions $\{ x^{\alpha_1}A(x), x^{\alpha_2}B(x)\} $ for some $\alpha_1, \alpha_2 \neq 0$ and $A(x), B(x)$ analytic.  More specifically, the $\alpha_i$ are the roots of  $x^2 + (n-2)x + (n-1) = 0$, that is
\begin{align*}
\alpha_i = \frac{ - (n-2) \pm \sqrt{ (n-2)^2 - 4(n-1)}}{2}.
\end{align*}

It is easy to see that $\sqrt{(n-2)^2 - 4(n-1)}$ is never an integer when $n\in \mathbb{N}$ is greater than $1$, so the solution set is spanned by $\{ r^{\alpha_1}A(r), r^{\alpha_2}B(r)\}$ for some $A(r), B(r)$ analytic in a neighborhood of $0$.  

 Thus, when $2\leq n <7$, solutions to the linearized equation have an oscillatory behavior near $0$ and hence fail to be strictly positive. When $n\geq 7$, any nonzero solution of \eqref{eqn: modified ode} has a singularity like $x^{-\frac{n-2}{2}}$ near $0$.  Hence $\lim_{r\searrow 0} h(r)$ fails to be finite, a contradiction. 
\end{proof}

By lemma \eqref{lemma: hits r axis}, $\gamma_R(t_1)$ lies on $\ell$ when $R$ is large enough.  However, by Proposition \eqref{prop: simple solutions}, when $R = \sqrt{2(2n-1)}$ $f_R$ begins and ends on the $x$-axis.  Hence
\[ R_* = \inf\{ R>0 : f_{\tilde{R}}(r_{\tilde{R}}(t_m)) = 0 \text{ for all } \tilde{R}>R\}\]
is well defined and greater than $0$.  

\begin{figure}[t]
\includegraphics[width=.5\textwidth]{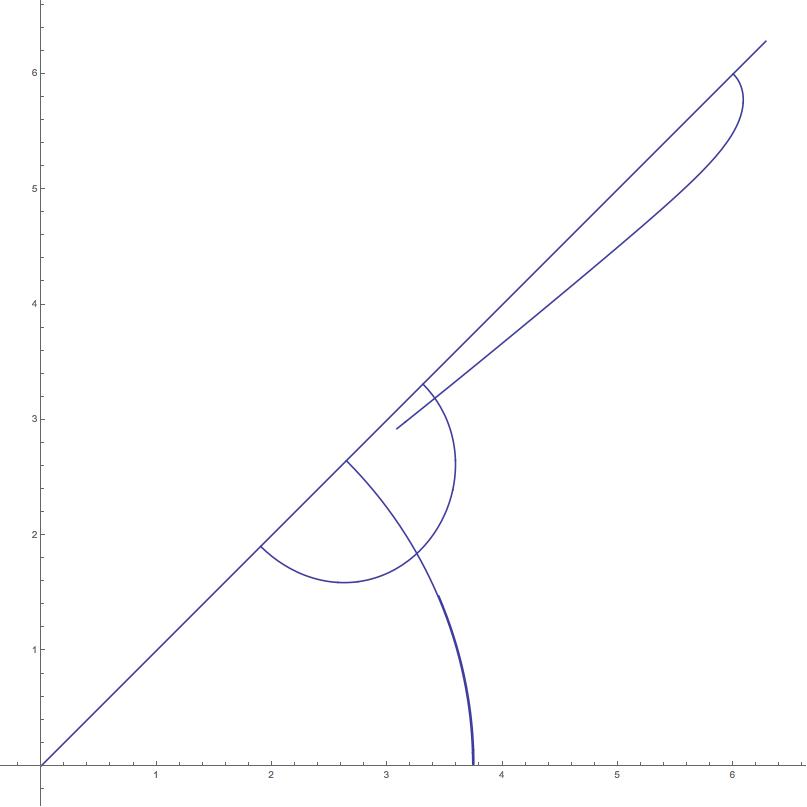}

\caption{The line $\ell$ and pieces of curves $\gamma_R$ for three different values of $R$ (when $m=n=4$.)  For large $R$, $\gamma_R$ behaves as in Lemma \ref{lemma: angle turn}. For $R=R_{*}$, $\gamma_{R_*}$ intersects $\ell$ orthogonally twice, and for $R = \sqrt{2(2n-1)}, \gamma_R$ is the arc of a circle and intersects the $x$-axis. } 
\end{figure}

\begin{lemma} $R_*$ satisfies
\begin{enumerate}
\item $\liminf_{R\searrow R_*}r_R(t_m) > 0$.
\item $\liminf_{R\searrow R_*} y_R(t_m) > 0$.
\end{enumerate}
\end{lemma}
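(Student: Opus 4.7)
I plan to prove both parts by contradiction, combining continuous dependence of ODE solutions on initial data with Proposition \ref{degeneracy at origin}. For (1), suppose there is a sequence $R_n \searrow R_*$ with $r_{R_n}(t_m(R_n)) \to 0$. Since $R_n > R_*$, the definition of $R_*$ gives $s_{R_n}(t_m(R_n)) = f_{R_n}(r_{R_n}(t_m(R_n))) = 0$, so the endpoints $\gamma_{R_n}(t_m(R_n))$ converge to the origin in $(r,s)$-coordinates. Assuming I can extract a subsequence along which $t_m(R_n) \to t_m^\infty \in [0,\infty)$, continuous dependence of solutions of \eqref{sr flow} on initial conditions yields $\gamma_{R_n} \to \gamma_{R_*}$ uniformly on $[0, t_m^\infty]$, and hence $\gamma_{R_*}(t_m^\infty) = (0,0)$. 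Proposition \ref{degeneracy at origin} then forces $\gamma_{R_*}$ to coincide with $\ell$, contradicting the initial data of $\gamma_{R_*}$.

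The main technical step is thus the uniform upper bound on $t_m(R_n)$. Since each $\gamma_{R_n}$ is unit-speed, this is equivalent to bounding the Euclidean arclength of $\gamma_{R_n}\big|_{[0, t_m(R_n)]}$. I would combine three ingredients: first, Lemma \ref{lemma: monotonicity} ensures $\phi_{R_n}$ is strictly monotone on $[0, t_m(R_n)]$ with total variation at most $\pi$; second, the formula in \eqref{sr flow} yields a uniform positive lower bound on $|\dot\phi_{R_n}|$ on compact subsets of $Q$ bounded away from the axes; third, near the origin the coefficient $(n-1)r/(r^2 - s^2)$ causes $|\dot\phi_{R_n}|$ to blow up rather than vanish, so the unit-speed curve traverses any fixed neighborhood of the origin in bounded time. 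Combining these inputs produces a bound $t_m(R_n) \leq T$ uniform in $n$, as required.

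For (2), observe that for each $R > R_*$ the endpoint $\gamma_R(t_m(R))$ lies on $\ell$ (i.e., $s_R(t_m) = 0$), so $y_R(t_m) = (r_R(t_m) - s_R(t_m))/\sqrt{2} = r_R(t_m)/\sqrt{2}$; hence $\liminf_{R \searrow R_*} y_R(t_m) > 0$ is equivalent to the conclusion of (1). The principal obstacle I anticipate is precisely the uniform bound on $t_m(R_n)$: although the monotonicity from Lemma \ref{lemma: monotonicity} rules out winding, one must carefully exclude the degenerate scenario in which $|\dot\phi_{R_n}|$ approaches zero near a critical configuration as $R \searrow R_*$, so that the curves neither stall nor reach the origin in infinite time.
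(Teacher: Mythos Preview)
Your approach to part (1) is essentially the paper's: argue by contradiction, pass to a limiting geodesic through the origin, and invoke Proposition~\ref{degeneracy at origin} to force that limit to be $\ell$, contradicting the initial data. The paper simply asserts that $\gamma_{R_n}$ converges in $C^2$ on compact sets to a geodesic through the origin and does not isolate the uniform time bound as a separate issue; your explicit attention to bounding $t_m(R_n)$ is reasonable but not a divergence in strategy.

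For part (2) your route differs genuinely from the paper's. You correctly observe that for every $R>R_*$ the endpoint lies on $\ell$, so $y_R(t_m)=r_R(t_m)/\sqrt{2}$ and the inequality in (2) \emph{as literally stated} is an immediate consequence of (1). The paper does not take this shortcut. Instead it proves the stronger assertion that for \emph{any} sequence of times $t_{R_n}\in[0,t_m(R_n)]$ one cannot have $y_{R_n}(t_{R_n})\to 0$: assuming such a sequence, it extracts a limiting geodesic meeting the $x$-axis orthogonally at some $(x_*,0)$ with $x_*>0$ (using part (1) to exclude $x_*=0$), and then argues that near $(x_*,0)$ the curves $\gamma_{R_n}$ must make a sharp bend that destroys graphicality over $\ell$, contradicting the definition of $t_m$. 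This stronger conclusion---uniform positivity of $y_R$ along the entire curve, not merely at the endpoint---is exactly what the next proposition invokes when it says ``the $\gamma_R$ are contained in a compact subset of $Q$ which is disjoint from the $x$ and $y$ axes.'' Since the minimum of $y_R$ on $[0,t_m]$ need not occur at $t_m$, your reduction proves the lemma as written but would leave that subsequent step unjustified; you would still need an argument like the paper's to close the gap.
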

\begin{proof}
We prove both statements by contradiction.  For (1), suppose there is a sequence $R_n\searrow R_*$ with $r_{R_n}(t_1(R_n)) \rightarrow 0$.  It follows that $\gamma_{R_n}$ converges in $C^2$ on compact sets to a geodesic $\gamma$ passing through the origin.  By proposition \ref{degeneracy at origin}, $\gamma$ is the line $\ell$.  But $\gamma_{R_n}$ does not converge in $C^2$ to $\ell$, since in particular $\theta_{R_n}(0) = -\frac{\pi}{4}$.

For (2), if there were a sequence $R_n \searrow R_*$ and times $t_{R_n}$ with $y_{R_n}(t_{R_n}) \rightarrow 0$, by compactness, we could find a subsequence (which we assume to be the original sequence) such that $\gamma_{R_n}(t_{R_n}) \rightarrow (x_*, 0)$ for some $x_* \geq 0$.  By part (1) of this lemma, we may assume that $x_*> 0$.  Then $\gamma_{R_n}$ converges to a geodesic $\gamma$ which intersects the $x$ axis at $(x_*, 0)$.  By Lemma \eqref{lemma: degeneracy}, $\gamma$ intersects the $x$-axis orthogonally.  Since $\gamma_{R_n}$ smoothly converges to $\gamma$ on compact sets away from $(x_*, 0)$, there are $\delta_n, \epsilon \rightarrow 0$ and times $t_n$ such that $\gamma_{R_n}(t_n) = (x_*+\delta_n, \epsilon_n)$ and $\theta_{R_n}(t_n) = -\frac{\pi}{2}$.  Since $\dot{\theta}(t_n) = -O(x_*)>0$ independent of $n$.  Hence, shortly after $t_n$, $\dot{\theta} = -O(x_*)-O(\frac{1}{\epsilon_n})$.  Hence, for large $n$, near $(x_*, 0)$, $\gamma_{R_n}$ travels nearly vertically downward, makes a sharp bend near $(x_*, 0)$, and then travels nearly vertically upward.  In particular, for sufficiently large $n$, $\gamma_{R_n}$ fails to be a normal graph over $\ell$ on an interval strictly smaller than $[0, t_{R_n}]$.  This contradicts the definition of $t_{R_n}$.  
\end{proof}

\begin{proposition}
$\gamma_{R_*}$ begins and ends on $\ell$. Moreover $\phi_{R_*}(t_m) = -\pi$.  
\end{proposition}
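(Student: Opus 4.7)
The plan is to prove the proposition via continuity arguments, approaching $R_*$ from both sides and exploiting the infimum definition of $R_*$. The claim that $\gamma_{R_*}$ begins on $\ell$ is essentially by construction: the portion of $\gamma_{R_*}$ under consideration is the $\{s \geq 0\}$ arc, which begins where the curve first meets $\ell$, and this first meeting exists by continuity from the corresponding meetings for nearby $\gamma_R$.

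For the remaining claims I first reduce the possible termination modes of $f_R$. By the discussion preceding Lemma \ref{lemma: angle turn}, $t_m(R)$ is characterized by $\phi_R(t_m) = 0$, $\phi_R(t_m) = -\pi$, $s_R(t_m) = 0$, or $y_R(t_m) = 0$. The case $\phi_R(t_m) = 0$ is excluded because $\phi_R(0) = 0$ and $\phi_R$ decreases strictly until hitting $-\pi$ or crossing $\ell$ (Lemma \ref{lemma: monotonicity}); the immediately preceding lemma rules out $y_R(t_m) = 0$ for $R$ near $R_*$. So only $\phi_R(t_m) = -\pi$ or $s_R(t_m) = 0$ can occur.

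Next, pick two sequences approaching $R_*$. Let $R_n \searrow R_*$, so that each $\gamma_{R_n}$ terminates with $s_{R_n}(t_m(R_n)) = 0$ by the definition of $R_*$. Since $R_*$ is an infimum, there exist $\tilde R_n \nearrow R_*$ for which $\gamma_{\tilde R_n}$ does not terminate on $\ell$; by the reduction above each then terminates with $\phi_{\tilde R_n}(t_m(\tilde R_n)) = -\pi$. By the preceding lemma the endpoints of all these curves lie in a compact subset of $Q$ bounded away from the axes and origin. Smooth dependence on initial conditions, together with compactness and a subsequence argument, then yields $\gamma_{R_n} \to \gamma_{R_*}$ and $\gamma_{\tilde R_n} \to \gamma_{R_*}$ in $C^k_{\mathrm{loc}}$, with the endpoints converging to points $P^+ \in \ell$ and $P^-$ at which $\phi_{R_*} = -\pi$.

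The final step is to argue $P^+ = P^-$ by a two-sided transversality argument. If $\gamma_{R_*}$ met $\ell$ at $P^+$ with $\phi_{R_*}(P^+) \in (-\pi, 0)$, the intersection would be transversal, and by continuous dependence the curves $\gamma_R$ for $R$ slightly less than $R_*$ would also cross $\ell$ near $P^+$ and terminate there, contradicting the existence of $\tilde R_n$. Conversely, if $\phi_{R_*}$ reached $-\pi$ strictly before $\gamma_{R_*}$ met $\ell$ (so at a point with $s > 0$), continuity would force $\gamma_R$ for $R$ slightly greater than $R_*$ to also terminate with $\phi = -\pi$ before meeting $\ell$, contradicting the definition of $R_*$. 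Hence $\phi_{R_*}$ reaches $-\pi$ exactly when $\gamma_{R_*}$ meets $\ell$, giving $\phi_{R_*}(t_m(R_*)) = -\pi$ and $\gamma_{R_*}(t_m(R_*)) \in \ell$. The main obstacle is this final transversality step, which requires uniform control of $t_m(R)$ under smooth dependence on initial conditions across both admissible termination modes.
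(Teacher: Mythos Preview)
Your argument is correct and follows the same underlying logic as the paper: compactness from the preceding lemma plus smooth dependence shows $\gamma_{R_*}$ ends on $\ell$, and an openness/transversality observation forces $\phi_{R_*}(t_m)=-\pi$.

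The paper's version is more economical. It approaches $R_*$ only from above: since $f_{\tilde R}(r_{\tilde R}(t_m))=0$ for every $\tilde R>R_*$, the preceding lemma confines the terminal points in a compact set away from the axes, and smooth dependence immediately gives that $\gamma_{R_*}$ itself ends on $\ell$. Then a single remark---that $\{R:\phi_R(t_m)>-\pi\}$ is open---finishes the proof, since membership of $R_*$ in this set would contradict its minimality. Your two-sequence setup with separate limit points $P^+$ and $P^-$ is therefore more than is needed: once you know $\gamma_{R_*}$ terminates on $\ell$ (your $P^+$), the ``converse'' case that $\phi_{R_*}$ reaches $-\pi$ at some $s>0$ is already excluded, so the $\tilde R_n$ branch and the discussion of $P^-$ can be dropped entirely. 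Note too that the preceding lemma is stated only for $R\searrow R_*$, so your appeal to it for the endpoints along $\tilde R_n\nearrow R_*$ is not literally justified; fortunately that branch is dispensable. Your self-identified ``main obstacle'' about uniform control of $t_m(R)$ is exactly what the preceding lemma supplies on the $R\searrow R_*$ side, and nothing more is required.
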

\begin{proof}
The preceding lemma shows that as $R\searrow R_*$ the $\gamma_{R}$ are contained in a compact subset of $Q$ which is disjoint from the $x$ and $y$ axes.  Hence, smooth dependence on initial conditions shows that $\gamma_{R_*}$ starts and ends on $\ell$.  By the definition of $R_*$, it follows that $\theta_{R_*} \geq -\pi$.  Since the set $\{ R: \phi_R(t_m)>-\pi \}$ is open by smoothness on initial conditions, it must be that $\phi_{R_*}(t_m) = -\pi$ for otherwise, we would contradict the minimality of $R_*$.  

\end{proof}

Since $\gamma_{R_*}$ intersects $\ell$ orthogonally, the union of $\gamma_{R_*}$ with its reflection it through $\ell$ is a smooth embedded closed geodesic $\gamma$.  Under the identification between geodesics of \eqref{geodesic metric} and $O(n)\times O(m)$ invariant self-shrinkers in Section 2, $\gamma$ corresponds to an embedded closed $O(n)\times O(n)$ shrinker and Theorem \ref{theorem: construction} follows.  
\end{proof}

\section{Final Remarks}

\begin{figure}[t]
\label{fig:immersed}
\includegraphics[width=.5\textwidth]{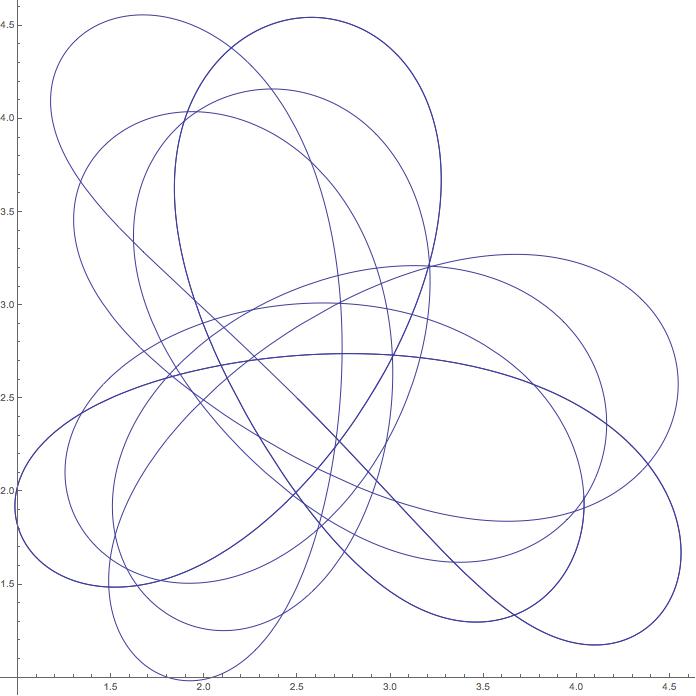}
\caption{Plot in Mathematica of a geodesic which is likely closed and immersed (when $m=n=4$).} 
\end{figure}

In \cite{DK}, the authors construct a large number of \emph{immersed} closed self-shrinkers with a single rotational symmetry.  It seems likely analogous techniques apply to the setting of this paper.  It is easy to find numerical evidence for such closed immersed examples; see Figure 2.

We conclude the paper with an observation (compare with Theorem 4, part (1) in \cite{KM}) which places some restrictions on the behavior of embedded geodesics.  

\begin{proposition}
Any embedded closed solution of \eqref{parametric ode} intersects $\ell$ at least twice.
\end{proposition}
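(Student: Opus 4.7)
The plan is to combine a Frankel-type argument exploiting positive Gauss curvature of $(Q^\circ, g)$ with a topological parity count to upgrade non-empty intersection to at least two intersections.

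The key input is the positivity of the Gauss curvature of $g$ on the open first quadrant $Q^\circ$. Writing $g = e^{2\phi}(dx^2+dy^2)$ with $\phi = (m-1)\log x + (n-1)\log y - (x^2+y^2)/4$, the conformal-change formula gives
\[
K_g = -e^{-2\phi}\Delta \phi = e^{-2\phi}\left(\tfrac{m-1}{x^2}+\tfrac{n-1}{y^2}+1\right) > 0.
\]

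Let $\gamma$ be a closed embedded solution of \eqref{parametric ode}. By Lemma \ref{lemma: degeneracy}, a solution can meet an axis only orthogonally at a single endpoint, so a \emph{closed} embedded geodesic must lie entirely in $Q^\circ$. Suppose for contradiction that $\gamma \cap \ell = \emptyset$; since $\gamma$ is connected it lies in $\ell^+$ or $\ell^-$, say $\ell^-$. I would then produce a $g$-minimizing geodesic segment $\sigma$ realizing the positive distance $d_g(\gamma, \ell)$, meeting both $\gamma$ and $\ell$ perpendicularly at its endpoints. Taking a normal variation along $\sigma$ generated by a $g$-parallel unit field $V$, the second variation formula yields
\[
L''(0) = -\int_0^{L(\sigma)} K_g\,|V|^2\,ds < 0,
\]
since the boundary terms vanish ($\gamma$ and $\ell$ are geodesics meeting $\sigma$ orthogonally). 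This contradicts the minimality of $\sigma$, so $\gamma \cap \ell \neq \emptyset$.

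For the final upgrade, note that by uniqueness of solutions to the system \eqref{geodesic flow}, $\gamma$ and $\ell$ cannot be tangent without coinciding, and they cannot coincide since $\gamma$ is compact while $\ell$ is not. Hence every intersection is transverse. The closed embedded curve $\gamma$ bounds a bounded region $\Omega \subset Q^\circ$; since both the origin and infinity lie outside $\Omega$, the ray $\ell$ enters and exits $\Omega$ the same number of times, and so must cross $\partial \Omega = \gamma$ an even number of times. Combined with non-emptiness, this gives at least two intersections.

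The main technical obstacle is existence of the minimizer $\sigma$: the metric $g$ is incomplete on $Q^\circ$, since the axes and infinity lie at finite $g$-distance ($\ell$ itself has finite $g$-length). One must verify that a minimizing sequence for $d_g(\gamma, \ell)$ stays within a compact subset of $Q^\circ$ rather than escaping to these singular limit points. This can be done by a direct estimate using that $\gamma$ is compactly contained in $Q^\circ$ and has positive Euclidean distance from $\ell$, so the minimizing paths can be restricted to a compact neighborhood of $\gamma \cup (\ell \cap K)$ for a suitable compact $K \subset Q^\circ$, on which the standard existence argument for minimizing geodesics applies.
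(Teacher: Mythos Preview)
Your approach is genuinely different from the paper's, and mostly sound, but the step you flag at the end is a real gap rather than a routine detail.

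The paper's proof is entirely elementary and avoids global Riemannian geometry. Assuming $\gamma$ lies (say) in $\ell^-$, it translates $\gamma$ in the $-e_1$ direction until the translated curve $\hat\gamma := \gamma - c\,e_1$ first touches $\ell$ at a point $p=(x_0,y_0)$. At $p$ the tangent directions agree; since translation preserves angles one has $\dot\theta_{\hat\gamma}(t_0)=\dot\theta_\gamma(t_0)$, and this is computed from \eqref{geodesic flow} at the \emph{untranslated} point $(x_1,y_0)=(x_0+c,y_0)$. Subtracting the flow equation for $\ell$ at $(x_0,y_0)$ (where $\dot\theta_\ell=0$) leaves only the difference $h(x_0)-h(x_1)$ with $h(x)=x/2-(m-1)/x$; monotonicity of $h$ forces $\dot\theta_{\hat\gamma}(t_0)>0$, contradicting the requirement $\dot\theta_{\hat\gamma}(t_0)\le 0$ for $\hat\gamma$ to stay below $\ell$. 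No completeness, no minimizers.

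Your Gauss curvature computation and the transversality/parity upgrade are correct. The problem is the existence of $\sigma$. You correctly observe that $(Q^\circ,g)$ is incomplete, with both ends of $\ell$ (the origin and infinity) at finite $g$-distance, but then assert that the minimization may be restricted to a compact $K\subset Q^\circ$. Nothing you have written rules out a minimizing sequence with endpoints $q_n\in\ell$ escaping to the origin or to infinity: near those ends the conformal factor is tiny, so $d_g(\gamma,q)$ for $q$ far along $\ell$ may well be \emph{smaller} than for $q$ in any fixed compact piece of $\ell$. To close the argument you would need to show, for instance, that $\inf_{q\in\ell} d_g(\gamma,q)$ is strictly less than both $\liminf_{q\to 0}d_g(\gamma,q)$ and $\liminf_{|q|\to\infty}d_g(\gamma,q)$, or else work in the metric completion and argue that a minimizing segment cannot terminate at a singular boundary point (Proposition~\ref{degeneracy at origin} is suggestive, since only $\ell$ reaches the origin among geodesics, but this does not immediately control limits of length-minimizing segments). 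As written this is a genuine gap; the paper's translation argument sidesteps it entirely.
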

\begin{proof}
We argue by contradiction.  There is clearly no closed geodesic which intersects $\ell$ exactly once, since the intersection would have to be tangential which would contradict uniqueness.  Hence we may suppose $\gamma$ is a closed geodesic lying below $\ell$.  Compactness implies the distance between $\gamma$ and $\ell$ is greater than $0$, hence there is a smallest $c>0$ such that the curve $\hat{\gamma}:= \gamma(t) - c {e}_1$ (where ${e}_1$ is the standard basis vector in $\R^2$ pointing in the positive $x$ direction) makes its first point of contact $p := (x_0, y_0)$ with $\ell$ at a time $t_0$.  Near $p$, if $\hat{\gamma}(t) = (x_{\hat{\gamma}}(t), y_{\hat{\gamma}}(t), \theta_{\hat{\gamma}}(t))$ is parametrized so that $\dot{x}_{\hat{\gamma}}(t)>0$, one has $\dot{\theta}_{\hat{\gamma}}(t_0)\leq 0$, since otherwise $\hat{\gamma}$ would go above $\ell$.    On the other hand, by using Equation \eqref{geodesic flow}, we have that 
\begin{align*}
- \dot{\theta}_{\hat{\gamma}}(t_0)=\dot{\theta}_{\ell}(t_0) - \dot{\theta}_{\hat{\gamma}}(t_0) &= \left( \left(\frac{x_0}{2} - \frac{n-1}{x_0}\right) - \left(\frac{x_1}{2} - \frac{n-1}{x_1}\right)\right)\sin\theta(t_0)
\end{align*}
where $x_1 = x_0+c$. 

It is easily checked that $h$ is monotone increasing.  Since $\sin\theta>0$ and $x_0$ and $x_1$ are the $x$ coordinates of the points on $p$ and $p+c{e}_1$, this implies the right hand side of the above equation is $<0$, which implies $\dot{\theta}_{\hat{\gamma}}(t_0)>0$, which is a contradiction.
\end{proof}



\begin{thebibliography}{9}



\bibitem{Angenent} S. Angenent, \emph{Shrinking doughnuts}, Nonlinear diffusion equations and their equilibrium states, 3 (Gregynog, 1989) Progr. Nonlinear Differential Equations Appl., vol. 7, Birkh\"{a}user Boston, Ma, 1992, pp. 21-38.  

\bibitem{Bender} C. Bender, \emph{Advanced Mathematical Methods for Scientists and Engineers}. McGraw-Hill, 1978

\bibitem{Chopp} D. Chopp., \emph{Computation of self-similar solutions for mean curvature flow,} Experiment. Math. 3 (1994), no. 1, 1-15. 

\bibitem{Delaunay} C. Delaunay, \emph{Sur la surface de r\'{e}volution dont la courbure moyenne est constante}, J. Math. Pures Appl., 6 (1841), 309Ð320.

\bibitem{Drugan} G. Drugan, \emph{An immersed $S^2$ self-shrinker}, to appear in Trans. Amer. Math. Soc.

\bibitem{DK} G. Drugan and S. Kleene, \emph{Immersed Self-Shrinkers}, http://arxiv.org/pdf/1306.2383v1.pdf 

\bibitem{Huisken} G. Huisken, \emph{Asymptotic behavior for singularities of the mean curvature flow}, J. Diff. Geom. 31 (1990), no. 1, 285-299.


\bibitem{H1} W. Hsiang, \emph{Minimal cones and the spherical Bernstein problem.  I.} Ann. of Math (2) 118 (1983), no. 1, 61-73. 
\bibitem{H2}W. Hsiang, \emph{Generalized rotational hypersurfaces of constant mean curvature in the Euclidean spaces.  I.} J. Differential Geom. 17 (1982), no. 2 337-356.
\bibitem{H3}W. Hsiang,  \emph{On the construction of infinitely many congruence classes of imbedded closed minimal hypersurfaces in $S^n(1)$ for all $n\geq 3$.}  Duke Math. J. 55 
\bibitem{H4} W. Hsiang and H. B. Lawson, \emph{Minimal submanifolds of low cohomogeneity}. J. Differential Geometry 5 (1971), 1-38.
\bibitem{H5}W. Hsiang On the compact homogeneous minimal submanifolds.  Proc. Nat. Acad. Sci. U.S.A 56 1966 5-6.

\bibitem{Kapouleas CMC} N. Kapouleas, \emph{Constant mean curvature surfaces in Euclidean three-space}, J. Diff. Geom. 33 (1991), no. 3, 683-715. 


\bibitem{Kapouleas} N. Kapouleas, S. Kleene, N.M. M\o ller.  \emph{Mean curvature self-shrinkers of high genus: Non-compact examples}.  http://arxiv.org/abs/1106.5454

\bibitem{KM} S. Kleene and N.M. M\o ller, \emph{Self-shrinkers with a rotational symmetry}.  Trans. Amer. Math. Soc 366 (2014), 3943-3963.

\bibitem{Moller} N.M M\o ller, \emph{Closed self-shrinking surfaces in $\R^3$ via the torus}, http://arxiv.org/abs/1111.7318

\end{thebibliography}
\end{document}